\theoremstyle{plain}
\newtheorem{theorem}{Theorem}[section]
\newtheorem{lemma}[theorem]{Lemma}
\theoremstyle{definition}
\theoremstyle{remark}
\renewcommand{\thefootnote}{\arabic{footnote}}
\def\R{\mathbb R}
\def\al{\alpha}
\def\om{\omega}
\def\Om{\Omega}
\def\de{\delta}
\def\De{\Delta} 
\def\lam{\lambda}
\def\vphi{\varphi}
\def\ep{\epsilon}
\def\na{\nabla}
\def\pa{\partial}
\def\la{\langle} 
\def\ra{\rangle} 
\def\lt{\left}
\def\rt{\right}
\def\o{\overline}
\numberwithin{equation}{section}
\title{Sharp Gagliardo--Nirenberg trace inequalities via mass transportion method and their affine versions}
\author{Van Hoang Nguyen\footnote{Institute of Research and Development, Duy Tan University, Da Nang, Vietnam.}
}
\begin{document}
\maketitle


\renewcommand{\thefootnote}{}

\footnote{Email: \href{mailto: Van Hoang Nguyen <vanhoang0610@yahoo.com>}{vanhoang0610@yahoo.com}.}

\footnote{2010 \emph{Mathematics Subject Classification\text}: 26D10, 26D15, 46E35, 42A40.}

\footnote{\emph{Key words and phrases\text}: Gagliardo--Nirenberg trace inequalities, affine Gagliardo--Nirenberg trace inequalities, Brenier map, optimal function.}

\renewcommand{\thefootnote}{\arabic{footnote}}
\setcounter{footnote}{0}

\begin{abstract}
Exploiting the mass transportation method, we prove a dual principle which implies directly the sharp Gagliardo--Nirenberg trace inequalities which was recently proved by Bolley et al. \cite{BCFGG17}. Moreover, we determine all optimal functions for these obtained sharp Gagliardo--Nirenberg trace inequalities. This settles a question left open in \cite{BCFGG17}. Finally, we use the sharp Gagliardo--Nirenberg trace inequality to establish their affine versions (i.e., the sharp affine Gagliardo--Nirenberg trace inequalities) which generalize a recent result of De N\'apoli et al. \cite{DeNapoli}. It was shown that the affine versions are stronger and imply the sharp Gagliardo--Nirenberg trace inequalities. We also determine all extremal functions for the sharp affine Gagliardo--Nirenberg trace inequalities.
\end{abstract}

\section{Introduction}
Let $\|\cdot\|$ be any norm on $\R^n$ with $n\geq 2$ and $\|\cdot\|_*$ be its dual norm, i.e., $\|x\|_* = \sup_{\|y\|\leq 1} x \cdot y$. Throughout this paper, we shall denote by
\[
\R^n_+ = \{ z= (t,x)\in \R\times \R^{n-1} : t \geq 0\},
\]
for the upper half space of $\R^n$ and by $e$ the vector $(1,0,\cdots,0)\in \R^n$. Then $\pa \R^n_+ = \{(0,x) : x\in \R^{n-1}\}$. For $p\in (1,n)$ and $q>1$, we denote $\mathcal D^{p,q}(\R^n_+)$ the set of all functions $f \in L^q(\R^n_+)$ such that its distributional gradient $\na f$ belongs to $L^p(\R^n_+)$. In recent paper \cite{BCFGG17}, Bolley et al., have proved the following Gagliardo--Nirenberg trace inequalities on $\R^n_+$ for $a\geq n > p>1$
\begin{equation}\label{eq:GNtrace}
\lt(\int_{\pa \R^n_+} |f|^{p\frac{a-1}{a-p}} dx\rt)^{\frac{a-p}{p(a-1)}} \leq D_{n,p,a} \lt(\int_{\R^n_+} \|\na f\|_*^p dz\rt)^{\frac\theta p} \lt(\int_{\R^n_+} |f|^{p\frac{a-1}{a-p}} dz\rt)^{(1-\theta)\frac{a-p}{p(a-1)}}
\end{equation}
for any smooth function $f\in \mathcal D^{p,\frac{p(a-1)}{a-p}}(\R^n_+)$, where $\theta$ is determined by the scaling invariant of \eqref{eq:GNtrace} and given by
\begin{equation}\label{eq:theta}
\theta = \frac{a-p}{p(a-n) + n-p}.
\end{equation}
The inequality \eqref{eq:GNtrace} is sharp, and the optimal constant $D_{n,p,a}$ is reached by function 
\[
f(z) = h_p(z) = \|z + e\|^{-\frac{a-p}{p-1}},\qquad z\in \R^n_+.
\]
It is an analogue of the sharp Gagliardo--Nirenberg inequalities due to Del Pino and Dolbeault \cite{DD2002,DD2003} (see \cite{CNV2004} for another proof by using mass transportation method) in the trace case. It contains the sharp Sobolev trace inequality as a special case with $a =n$ which was proved by Nazaret \cite{Nazaret2006}. In \cite{BCFGG17}, the inequality \eqref{eq:GNtrace} was derived from a new kind of Borell--Brascamp--Lieb inequality (see Theorem $6$ in \cite{BCFGG17}). The latter inequality also leads to the sharp Sobolev inequalities \cite{Aubin1976,Tal1976}, the sharp Gagliardo--Nirenberg inequalities \cite{CNV2004,DD2002,DD2003} and the sharp Sobolev trace inequalities \cite{Nazaret2006}. This method gives a new bridge between the geometric point of view of the Brunn--Minkowski inequality and the functional point of view of the Sobolev type inequalities. Notice that the relation between Brunn--Minkowski inequality and the Sobolev type inequalities was observed by Bobkov and Ledoux. It has shown by Bobkov and Ledoux \cite{BL00,BL08} that the Sobolev type inequalities (such as logarithmic Sobolev inequality and Sobolev inequalities) can be reached by a functional version of Brunn--Minkowski inequality known as Borell--Brascamp--Lieb inequality due to Borell \cite{Borell75} and Brascamp and Lieb \cite{BL76}. However, we can not use the standard functional version of the Borell--Brascamp--Lieb inequality (see \cite{BL08}). It was asked by Bobkov and Ledoux whether the Sobolev inequalities can be proved directly from a new kind of Borell--Brascamp--Lieb inequality. The paper \cite{BCFGG17} answers affirmatively this question. However, it left open the question on the extremal functions for the inequality \eqref{eq:GNtrace}. It was conjectured in \cite{BCFGG17} that the equality holds in \eqref{eq:GNtrace} if and only if $f =h_p$ up to translations, dilations and multiplicative constants. 

The aim of this paper is to give a new and direct proof of the inequality\eqref{eq:GNtrace} based on the mass transportation method. It is well known that the mass transportation method is an useful tool to prove some inequalities (in sharp form) both in analysis and geometry (such as Brunn--Minkowski inequality and its functional versional, the isoperimetric inequality, the sharp Sobolev and sharp Gagliardo--Nirenberg inequalities \cite{CNV2004,Nguyen2015}, the sharp Sobolev trace inequality \cite{Nazaret2006}, and some Gaussian type inequalities \cite{Cordero2002}). In fact, we shall use the mass transportation method to prove a dual principle (see Theorem \ref{Maintheorem1} below) which immediately implies the inequality \eqref{eq:GNtrace}. Furthermore, tracing back to estimates in this method, we cans show that the optimal constant $D_{n,p,a}$ is reached only by $h_p$ up to translations, dilations and multiplicative constants (see Theorem \ref{Maintheorem2} below). This settles a question left open in \cite{BCFGG17}. Note that the case $a =n$ (corresponding to the Sobolev trace inequality) was recently treated by Maggi and Neumayer \cite{MN2017} by using again the mass transportation method. For a function $f$ on $\R^n_+$, we denote $\|f\|_p$ the $L^p-$norm of $f$ with respect to Lebesgue measure on $\R^n_+$, i.e.,
\[
\|f\|_p = \lt(\int_{\R^n_+} |f(z)|^p dz\rt)^{\frac 1p}.
\]
Our main results read as follows.

\begin{theorem}\label{Maintheorem1}
Let $a\geq n > p >1$, and let $f$ and $g$ be functions in $\mathcal D^{p,\frac{p(a-1)}{a-p}}(\R^n_+)$ such that $\|f\|_p = \|g\|_p = \|h_p\|_p$. Then we have
\begin{align}\label{eq:dual}
a\int_{\R^n_+}& |g|^{\frac{p(a-1)}{a-p}} dz - (a-1)\int_{\R^n_+} \|z+e\|^{\frac p{p-1}} |g|^{\frac{ap}{a-p}} dz \notag\\
&\leq (a-n) \int_{\R^n_+} |f|^{\frac{p(a-1)}{a-p}} dz +\frac{(a-1)(p-1)^{p-1}}{(a-p)^p}\int_{\R^n_+} \|\na f\|_*^p dz -\int_{\pa \R^n_+} |f|^{\frac{p(a-1)}{a-p}} dx,
\end{align}
with equality if $f =g =h_p$. As a consequence, we have the following dual principle
\begin{align}\label{eq:dualprin}
\sup_{\|g\|_{\frac{ap}{a-p}} = \|h_p\|_{\frac{ap}{a-p}}}&\Big(a\int_{\R^n_+}|g|^{\frac{p(a-1)}{a-p}} dz - (a-1)\int_{\R^n_+} \|z+e\|^{\frac p{p-1}} |g|^{\frac{ap}{a-p}} dz\Big) \notag\\
&= \inf_{\|f\|_{\frac{ap}{a-p}} = \|h_p\|_{\frac{ap}{a-p}}}\Bigg((a-n) \int_{\R^n_+} |f|^{\frac{p(a-1)}{a-p}} dz +\frac{(a-1)(p-1)^{p-1}}{(a-p)^p}\int_{\R^n_+} \|\na f\|_*^p dz\notag\\
&\hspace{7cm} -\int_{\pa \R^n_+} |f|^{\frac{p(a-1)}{a-p}} dx\Bigg),
\end{align}
with $h_p$ is extremal in both variational problems. Furthermore, the Gagliardo--Nirenberg trace inequality \eqref{eq:GNtrace} holds.
\end{theorem}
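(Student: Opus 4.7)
The plan is to derive \eqref{eq:dual} by mass transportation and then read off \eqref{eq:dualprin} and \eqref{eq:GNtrace} as immediate consequences. Write $\alpha=\frac{p(a-1)}{a-p}$, $\beta=\frac{ap}{a-p}$, $\sigma=\frac{p}{p-1}$, and interpret the hypothesis as ensuring that the measures $|f|^\beta\,dz$ and $|g|^\beta\,dz$ share a common total mass (the natural normalization for a transport argument, consistent with the constraint in \eqref{eq:dualprin}). Brenier's theorem then produces a convex potential $\vphi$ on $\R^n$ whose gradient $T=\na\vphi$ pushes $|f|^\beta\,dz$ onto $|g|^\beta\,dz$. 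Both measures being supported in $\R^n_+$, the map $T$ sends $\R^n_+$ into itself (so in particular $T_1\ge 0$ on $\pa\R^n_+$), and the Monge--Amp\`ere equation $|f(z)|^\beta=|g(T(z))|^\beta\det D_AT(z)$ holds almost everywhere, $D_AT$ denoting the Alexandrov derivative.

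Raising this to the power $(a-1)/a$ and changing variables gives $\int|g|^\alpha\,dy=\int|f|^\alpha(\det D_AT)^{1/a}\,dz$. To this I apply the weighted AM--GM bound $a(\det M)^{1/a}\le(a-n)+\tr(M)$, valid for every symmetric positive-semidefinite matrix $M$ whenever $a\ge n$, obtaining
\[
a\int_{\R^n_+}|g|^\alpha\,dy\le(a-n)\int_{\R^n_+}|f|^\alpha\,dz+\int_{\R^n_+}|f|^\alpha\,\mathrm{div}\,T\,dz.
\]
Writing $\mathrm{div}\,T=\mathrm{div}(T+e)$ and integrating by parts with outward normal $-e$ on $\pa\R^n_+$, the boundary contribution is $-\int_{\pa\R^n_+}|f|^\alpha(T_1+1)\,dx$, which is dominated by $-\int_{\pa\R^n_+}|f|^\alpha\,dx$ since $T_1\ge 0$ there, while the interior remainder is $-\int(T+e)\cdot\na(|f|^\alpha)\,dz$. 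Subtracting the change-of-variables identity $\int\|z+e\|^\sigma|g|^\beta\,dz=\int\|T+e\|^\sigma|f|^\beta\,dz$ from both sides reduces the proof of \eqref{eq:dual} to the pointwise bound
\[
-(T+e)\cdot\na(|f|^\alpha)-(a-1)\|T+e\|^\sigma|f|^\beta\le\frac{(a-1)(p-1)^{p-1}}{(a-p)^p}\|\na f\|_*^p,
\]
which follows from $-x\cdot y\le\|x\|\|y\|_*$ and a sharp Young inequality in the scalar $u=\|T+e\|$; a direct computation, using that the exponent of $|f|$ in the optimized expression collapses to zero (because $\beta(p-1)=p(\alpha-1)$), returns exactly the advertised constant.

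Equality is attained at every step --- AM--GM, the duality pairing, and Young --- when $f=g=h_p$ and $T=\mathrm{id}$, so \eqref{eq:dual} is sharp. The dual principle \eqref{eq:dualprin} is then immediate: \eqref{eq:dual} asserts $\mathrm{LHS}(g)\le\mathrm{RHS}(f)$ for all admissible $f,g$, hence $\sup_g\mathrm{LHS}(g)\le\inf_f\mathrm{RHS}(f)$, and the common value is attained at $f=g=h_p$. For \eqref{eq:GNtrace}, set $g=h_p$ in \eqref{eq:dual}: the integrands in $a\int|h_p|^\alpha$ and $(a-1)\int\|z+e\|^\sigma|h_p|^\beta$ both equal $\|z+e\|^{-p(a-1)/(p-1)}$, so the left-hand side collapses to $\int|h_p|^\alpha$, and a standard one-parameter rescaling of $f$ eliminates the mass constraint and produces the scaling-invariant inequality \eqref{eq:GNtrace} with the sharp constant $D_{n,p,a}$. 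The main technical obstacle is regularity: $T$ is only BV, so $\det D_AT$ and the integration by parts must be handled in the Alexandrov/distributional sense; I would follow the approach of \cite{CNV2004,Nazaret2006}, using that the distributional divergence of $T$ dominates $\tr(D_AT)$ (the singular part being nonnegative), which preserves the direction of the AM--GM step and legitimizes the boundary integration by parts.
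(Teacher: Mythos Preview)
Your proposal is correct and follows essentially the same mass-transportation route as the paper: Brenier map from $|f|^{\beta}dz$ to $|g|^{\beta}dz$, the Monge--Amp\`ere change of variables, the AM--GM bound $a(\det M)^{1/a}\le (a-n)+\tr M$, integration by parts with the boundary sign $T\cdot e\ge 0$, and Young's inequality, with the regularity issues handled exactly as in \cite{CNV2004,Nazaret2006}. The only cosmetic difference is that you combine the transported term $(a-1)\int\|T+e\|^{\sigma}|f|^{\beta}$ with the gradient term \emph{pointwise} via Young, whereas the paper first applies H\"older (obtaining \eqref{eq:IBPBV**}) and then Young on the resulting integrals; your final passage to \eqref{eq:GNtrace} is compressed but is precisely the paper's Young-plus-scaling step \eqref{eq:Young2}--\eqref{eq:lambda}.
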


Concerning to the optimal functions for the inequality \eqref{eq:GNtrace}, we have

\begin{theorem}\label{Maintheorem2}
A function $f\in \mathcal D^{p,\frac{p(a-1)}{a-p}}(\R^n_+)$ is optimal in the Gagliardo--Nirenberg trace inequality \eqref{eq:GNtrace} if and only if 
\[
f(t,x) = c h_p(\lam(t,x-x_0)), \qquad (t,x) \in \R^n_+,
\]
for some $c\in \R$, $\lambda >0$ and $x_0\in \R^{n-1}$.
\end{theorem}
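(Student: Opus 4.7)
The plan is to trace the equality case in \eqref{eq:GNtrace} back through the mass transportation proof of Theorem \ref{Maintheorem1}, exploiting the fact that \eqref{eq:GNtrace} is obtained from the dual inequality \eqref{eq:dual} by choosing $g = h_p$ on the left-hand side, which saturates the supremum. Thus, if $f$ is extremal in \eqref{eq:GNtrace}, then after applying the symmetries $f(t,x)\mapsto c\,f(\lam(t,x-x_0))$ to arrange the normalization of Theorem \ref{Maintheorem1}, every intermediate inequality in the Brenier-map argument used to prove \eqref{eq:dual} must be an equality when applied to $f$ and $g=h_p$. The goal is then to read off the equality conditions and show that they force $f = h_p$ before undoing the normalization.

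The proof of \eqref{eq:dual} introduces the Brenier map $T = \na \Phi$ pushing the probability measure proportional to $|f|^{p(a-1)/(a-p)}\,dz$ forward onto the one proportional to $h_p^{p(a-1)/(a-p)}\,dz$, and combines three pointwise estimates: (i) the Monge--Amp\`ere identity together with the AM-GM bound $n(\det D^2\Phi)^{1/n}\leq \De\Phi$; (ii) a Young-type inequality in the dual pair $(\|\cdot\|,\|\cdot\|_*)$ coupling $\na f$ with $T-\id$; (iii) the divergence theorem on $\R^n_+$, which produces the trace integral on $\pa \R^n_+$. Equality in (i) forces $D^2\Phi(z) = \lam(z)\,\id$ almost everywhere on the support of $f$; combined with the convexity of $\Phi$ this yields that $\lam$ is in fact constant and $\Phi(z) = \frac{\lam}{2}|z|^2 + b\cdot z + \gamma$, so $T(z) = \lam z + b$ is affine. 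Equality in (ii) then determines $\na f$ pointwise as a specific multiple of $T(z)-z$, which, together with the affine form of $T$ and the fact that $h_p$ saturates the chain precisely when $T = \id$, forces $f$ to be a scalar multiple of $h_p\circ T$. Finally, since $T$ must push a measure supported in $\R^n_+$ to one supported in $\R^n_+$, and equality in the boundary term in (iii) requires $\pa \R^n_+$ to be mapped into $\pa \R^n_+$, one finds $b\in \{0\}\times \R^{n-1}$. Undoing the symmetries then yields $f(t,x) = c\, h_p(\lam(t,x-x_0))$.

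The main obstacle is the rigidity step that promotes the pointwise identity $D^2\Phi(z) = \lam(z)\,\id$, valid only almost everywhere and only on $\{f \neq 0\}$, into a globally affine structure for $T$ on the support of $|f|^{p(a-1)/(a-p)}$. One must either appeal to Caffarelli-type regularity of the Brenier potential (which demands geometric convexity of the target support, available here because $h_p$ is strictly positive on $\R^n_+$) or use a softer argument based on connectedness of the support together with the Young-equality condition in (ii), in the spirit of the treatment of the Sobolev trace case $a=n$ in Maggi--Neumayer \cite{MN2017}. A secondary but subtle point is the boundary rigidity $b_1 = 0$, which reflects the broken translation invariance of \eqref{eq:GNtrace} in the $t$-direction and has to be extracted from the way the Brenier map interacts with the half-space boundary.
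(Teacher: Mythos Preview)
Your overall strategy is exactly the paper's: normalize $f$, set $g=h_p$, let $\nabla\varphi$ be the Brenier map, and trace equality back through the Monge--Amp\`ere/AM--GM step, the H\"older/Young step, and the boundary integration by parts. A few points, however, do not match and would cause trouble if you tried to carry them out.

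First, the transported density is $|f|^{ap/(a-p)}$, not $|f|^{p(a-1)/(a-p)}$; the latter exponent appears only \emph{after} one manipulates the Monge--Amp\`ere identity as in \eqref{eq:useBrenier}. Second, for $a>n$ the AM--GM chain \eqref{eq:AGinequality} has two inequalities, and equality in both forces $\det D^2_A\varphi = 1$ \emph{and} all eigenvalues equal, hence $D^2_A\varphi = \mathrm{Id}$ directly --- there is no variable $\lambda(z)$ to rigidify. The proportionality $D^2\varphi=\lambda(z)\,\mathrm{Id}$ with $\lambda$ to be shown constant arises only in the borderline case $a=n$. Third, and most importantly, the obstacle you flag is not quite the one the paper faces: since $h_p$ is not compactly supported, the integration-by-parts identity \eqref{eq:IBPBV0} is not available for the actual optimizer, and one cannot simply invoke Caffarelli regularity. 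The paper's remedy is Lemma~\ref{IBPinequality}, an integration-by-parts \emph{inequality} proved for general $f,g$ by a three-layer approximation (cutoff in $t$, compact truncation in $\Omega$, mollification), after which one appeals to the argument of \cite{CNV2004} to show that $D^2_{\mathcal D'}\varphi$ has no singular part on the support of $f$. Finally, the boundary rigidity $b_1=0$ is not obtained from equality in the boundary term (the Brenier map need not even be defined on $\partial\R^n_+$); rather, once $\nabla\varphi(z)=z-(t_0,x_0)$, the image condition $\nabla\varphi(\mathrm{supp}\,f)\subset\overline{\R^n_+}$ gives $\mathrm{supp}\,f\subset\overline{\R^n_+}+t_0e$, whence $t_0\ge0$, and then $t_0>0$ is excluded because $f$ would equal $\mathbf 1_{\{t\ge t_0\}}h_p(\cdot-(t_0,x_0))$, which lies outside $\mathcal D^{p,\frac{p(a-1)}{a-p}}(\R^n_+)$.
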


As mentioned above, the proofs of Theorems \ref{Maintheorem1} and \ref{Maintheorem2} are based on the mass transportation method. We shall briefly recall some facts from this method. Let $\mu$ and $\nu$ be two Borel measures on $\R^n$ of the same total measure such that $\mu$ is absolutely continuous with respect to Lebesgue measure. Then there exists a convex function $\varphi$ on $\R^n$ such that
\begin{equation}\label{eq:push-forward}
 \int_{\R^n} b(y)d\nu(y) = \int_{\R^n} b(\nabla\varphi(z))d\mu(z) .
\end{equation}
for every bounded or positive, Borel measurable function $b:\R^n\to \R$ (see \cite{Brenier1991, McCann1995}). Furthermore, $\overline{\nabla\varphi(\text{\rm supp} \,\mu)} = \text{\rm supp}\, \nu$ and $\nabla \varphi$ is uniquely determined $d\mu-$almost everywhere. We call $\nabla\varphi$ the \emph{Brenier map} which transports $\mu$ to $\nu$. See \cite{Vil2009} for a review and dicussion of existing proofs of this map. Since $\varphi$ is convex, it is differentiable almost everywhere on its domain $\{ \varphi < \infty\}$; in particular, it is differentiable $d\mu-$almost everywhere. In additionally, if $\mu$ and $\nu$ are absolutely continuous with respect to the Lebesgue measure on $\R^n$, with densities $F$ and $G$ respectively. Then $\na\vphi$ satisfies
\begin{equation}\label{eq:push-forward1}
\int_{\R^n} b(y)G(y)dy = \int_{\R^n} b(\nabla\varphi(z))F(z)dz
\end{equation}
for every bounded or positive, Borel measurable function $b:\R^n\to \R$. It is well-known that $\na \varphi$ solves the following Monge--Amp\`ere equation in the $F(x)dx-$almost everywhere sense (see \cite{McCann1997})
\begin{equation}\label{eq:MAequation}
F(z) = G(\nabla\varphi(z))\det\,(D^2_A\varphi(z))
\end{equation} 
where $D^2_A\varphi$ is Hessian of $\vphi$ in Aleksandrov sense, i.e, as the absolutely continuous part of the distributional Hessian of the convex function $\varphi$.

In the proofs below, it is required to use H\"older's and Young's inequality. We recall them below. Let $\|\cdot\|$ be a norm on $\R^n$, $B$ its unit ball and $\|\cdot\|_*$ its dual norm. Then for any $\lambda >0$, Young's inequality holds
\begin{equation}\label{eq:Youngineq}
X\cdot Y \leq \frac{\lambda^{-p}}p \|X\|_*^p + \frac{\lambda^q}q \|Y\|^q, \quad\, q = \frac{p}{p-1}.
\end{equation}
For $X:\R^n\to (\R^n,\|\cdot\|_*)$ in $L^p$ and $Y: \R^n\to (\R^n,\|\cdot\|)$ in $L^q$, integration of~\eqref{eq:Youngineq} and optimization in $\lambda$ gives H\"older's inequality in the form
\begin{equation}\label{eq:Holderineq}
\int_{\R^n} X(z) \cdot Y(z)\, dz \leq \left(\int_{\R^n}\|X(z)\|_*^p \, dz\right)^{\frac1p}\, \left(\int_{\R^n} \|Y(z)\|^q\, dz\right)^{\frac1q}.
\end{equation}

Since $\|\cdot\|$ is a Lipschitz function on $\R^n$, it is differentiable almost everywhere. When $0\not=z\in \R^n$ is such a point of differentiability, the gradient of the norm at $z$ is the unique vector $z^* = \nabla(\|\cdot\|)(z)$ such that
\begin{equation}\label{eq:gradientofnorm}
\|z^*\|_* = 1, \quad\quad z\cdot z^* = \|z\| = \sup_{\|y\|_* = 1}z\cdot y.
\end{equation}
These equalities will be used to verify the extremality of $h_p$.

As an application of the sharp Gagliardo--Nirenberg trace inequalities \eqref{eq:GNtrace}, we shall establish the sharp affine Gagliardo--Nirenberg trace inequalities in the half space $\R^n_+$. We mention here that the sharp affine Sobolev trace inequality was recently established by De N\'apoli et al., \cite{DeNapoli}. Let $p \in (1,n)$, let us denote by $\mathcal D^p(\R^n_+)$ the set of all functions $f$ in $\R^n_+$ such that its distribution gradient belongs to $L^p(\R^n_+)$ and the set $\{|f|> s\}$ has finite measure for any $s >0$. It is known that $\mathcal D^p(\R^n_+) \hookrightarrow L^{\frac{np}{n-p}}(\R^n_+)$ (see \cite{Nguyen2015}). Let $f \in \mathcal D^p(\R^n_+)$, we set
\[
\mathcal E_p(f) = c_{n-1,p} \lt(\int_{S^{n-2}}\lt(\int_{\R^n_+} |\la \na f(x), \xi\ra|^p dx\rt)^{-\frac{n-1}p} d\xi\rt)^{-\frac1{n-1}},
\]
where $S^{n-2}$ denotes the unit sphere in $\{0\}\times \R^{n-1}$ and
\[
c_{n,p} = (n \om_n)^{\frac1n}\lt(\frac{n \om_n \om_{p-1}}{2 \om_{n+p-2}}\rt)^{\frac1p},\qquad \om_s = \frac{\pi^{\frac s2}}{\Gamma(1+ \frac s2)}, \quad s >0.
\]
It was proved in \cite{DeNapoli} that if $p \in (1,n), n\geq 3$ and $f \in \mathcal D^p(\R^n_+)$, we have
\begin{equation}\label{eq:affineSobolevtrace}
\lt(\int_{\pa \R^n_+} |f(0,x)|^{\frac{p(n-1)}{n-p}} dx\rt)^{\frac{n-p}{n-1}} \leq p \mathcal B_{n,p} \mathcal E_p(f)^{p-1} \lt(\int_{\R^n_+} |\pa_t f|^p dxdt\rt)^{\frac1p},
\end{equation}
where 
\[
\mathcal B_{n,p} = \pi^{-\frac{p-1}2}\lt(\frac{(p-1)^{\frac{p-1}p}}{n-p}\rt)^{p-1} \lt(\frac{\Gamma(n) \Gamma(\frac{n+1}2)}{(n-1) \Gamma(\frac{n-1}p) \Gamma(\frac{n(p-1)+1}{p})}\rt)^{\frac{p-1}{n-1}}.
\]
The inequality \eqref{eq:affineSobolevtrace} is sharp and there is equality in \eqref{eq:affineSobolevtrace} if 
\begin{equation}\label{eq:extremal}
f(t,x) = \pm \lt((\lam t + \de)^p + |A(x-x_0)|^p\rt)^{-\frac{n-p}{p(p-1)}},
\end{equation}
for some constant $\lam, \de >0$, $x_0 \in R^{n-1}$ and $A \in {\rm GL}_{n-1}$.

The proof of \eqref{eq:affineSobolevtrace} given in \cite{DeNapoli} is based on the sharp Sobolev trace inequality due to Nazaret \cite{Nazaret2006} and the sharp affine $L_p$ isoperimetric inequalities due to Lutwak, Yang and Zhang \cite{LYZ2000} (more precisely, the $L_p$ Busemann--Petty centroid inequality). This approach was already used in \cite{Haddad} to give a new proof a the sharp affine $L_p$ Sobolev inequalities due to Lutwak, Yang and Zhang \cite{LYZ2002,LYZ2006}. The advantage of this new method is that it is more simple than the ones of Lutwak, Yang and Zhang, and avoids the use of the solution of the $L_p$ Minkowski problem \cite{LYZ2004} which is still open in general. It was further developed by the author in \cite{Nguyen2016} to prove a general affine P\'olya--Szeg\"o principle and its equality characterization (i.e., a Brothers--Ziemer type result). Applying these results, the author gave a new proof of general affine Sobolev inequalities and classified the extremal functions for these inequalities which remains open from the previous works (e.g., see \cite{HS09Sob,HSX,Wang}). It is remakable by Jensen's inequality and Young's inequality that 
\[
\mathcal E_p(f)^p \leq \int_{\R^n_+} |\na_x f(t,x)|^p dx dt
\]
where $\na_x f$ is the gradient of $f$ in variables $x\in \R^{n-1}$. Hence, we have from \eqref{eq:affineSobolevtrace}
\begin{equation}\label{eq:EuSob}
\lt(\int_{\pa \R^n_+} |f(0,x)|^{\frac{p(n-1)}{n-p}} dx\rt)^{\frac{n-p}{n-1}} \leq (p-1)^{\frac{p-1}p} \mathcal B_{n,p} \int_{\R^n_+} \lt(|\na_x f|^p + |\pa_t f|^p\rt) dtdx,
\end{equation}
which is exactly the Sobolev trace inequality of Nazaret with the norm defined by $\|(t,x)\|= (|t|^{\frac p{p-1}} + |x|^{\frac p{p-1}})^{\frac{p-1}p}$. In the special case $p=2$, the inequality \eqref{eq:EuSob} reduces to the famous sharp Sobolev trace inequality due to Beckner \cite{Beckner} and Escobar \cite{Escobar}. In particular, the inequality \eqref{eq:affineSobolevtrace} is stronger than and implies the sharp Sobolev trace inequality of Beckner and Escobar. 

Concerning to the extremal functions for the inequality \eqref{eq:affineSobolevtrace}, it was proved in \cite{DeNapoli} (see Theorem $1.1$) that the extremal functions for \eqref{eq:affineSobolevtrace} in the case $p=2$ are only functions of the form \eqref{eq:extremal} with $p=2$. This follows from the characterization of extremal functions for the Sobolev trace inequality due to Beckner and Escobar, and the equality case in the $L_p$ Busemann--Petty centroid inequality (see \cite{LYZ2000}). The case $p\not=2$ was left open since the lack of characterization of extremal functions for the Sobolev trace inequality due to Nazaret \cite{Nazaret2006}. However, as mentioned above, the extremal functions for the Sobolev trace inequality already was treated by Maggi and Neumayer (see \cite{MN2017}). Hence, as for the case $p =2$, we can conclude that all extremal functions for the inequality \eqref{eq:affineSobolevtrace} are of the form \eqref{eq:extremal}. Another important consequence of Theorem \ref{Maintheorem2} is that the inequality \eqref{eq:affineSobolevtrace} does not imply the classical sharp Sobolev trace inequality
\[
\lt(\int_{\pa \R^n_+} |f(0,x)|^{\frac{p(n-1)}{n-p}} dx\rt)^{\frac{n-p}{p-1}} \leq \mathcal K_{n,p} \int_{\R^n_+} |\na f(t,x)|^p dx dt,
\]
with $\mathcal K_{n,p}$ is the sharp constant. We refer the reader to the end of the paper \cite{DeNapoli} for more detail discussions about this fact.

Notice that the inequality \eqref{eq:affineSobolevtrace} possesses an important affine invariant property as described below. Denote by ${\rm GL}_{n,+}$ the set of matrices having the form
\[
\begin{bmatrix}
\lam &0 &\cdots & 0\\
0\\
\vdots & & B\\
0
\end{bmatrix}
\]
where $\lam >0$ and $B \in {\rm GL}_{n-1}$. For any $A \in {\rm GL}_{n,+}$ and $f\in \mathcal D^p(\R^n_+)$, we define $g(t,x) = f(A(t,x)) = f(\lam t, Bx)$, the simple computations show that
\[
\lt(\int_{\pa \R^n_+} |g(0,x)|^{\frac{p(n-1)}{n-p}} dx \rt)^{\frac{n-p}{n-1}}= |{\rm det} B|^{-\frac{n-p}{n-1}} \lt(\int_{\pa \R^n_+} |f(0,x)|^{\frac{p(n-1)}{n-p}} dx\rt)^{\frac{n-p}{n-1}},
\]
and
\[
\mathcal E_p(g)^{p-1} \lt(\int_{\R^n_+} |\pa_t g(t,x)|^p dx dt\rt)^{\frac1p} =|{\rm det} B|^{-\frac{n-p}{n-1}} \mathcal E_p(f)^{p-1} \lt(\int_{\R^n_+} |\pa_t f(t,x)|^p dx dt\rt)^{\frac1p}.
\]
Hence, the inequality \eqref{eq:affineSobolevtrace} is ${\rm GL}_{n,+}$ invariant. It is easy to see that ${\rm GL}_{n,+}$ is a subgroup of ${\rm SGL}_{n,+}$ consisting of matrices of the form
\[
\begin{bmatrix}
\lam &0 &\cdots & 0\\
a_1\\
\vdots & & B\\
a_{n-1}
\end{bmatrix}
\]
where $\lam >0$, $a_1,\ldots,a_{n-1} \in \R$, and $B \in {\rm GL}_{n-1}$. Obviously, $\R^n_+$ is invariant under the action of elements in ${\rm SGL}_{n,+}$. This suggests that the inequality \eqref{eq:affineSobolevtrace} can be strengthened to a ${\rm SGL}_{n,+}$ invariant form. We will show that this is the case. Indeed, using Theorems \ref{Maintheorem1} and \ref{Maintheorem2} and the sharp $L_p$ Busemann--Petty centroid inequality, we shall prove a family of the affine sharp Gagliardo--Nirenberg trace inequalities in $\R^n_+$ which contains the inequality \eqref{eq:affineSobolevtrace} as a special case. To state our next result, let us introduce a new quantity on $\mathcal D^p(\R^n_+)$ which is ${\rm SGL}_{n,+}$ invariant. For $f \in \mathcal D^p(\R^n_+)$, we denote
\[
\mathcal A_p(f)^p =c_{n,p,a}^p (\mathcal E_p(f))^{p-1 -\frac{(p-1)(a-n)}{a-1}} \inf_{\vec{a} \in \R^{n-1}} \lt(\int_{\R^n_+} |D_{(1,\vec{a})} f|^p dx dt\rt)^{\frac1p(1 +\frac{(p-1)(a-n)}{a-1})}.
\]
where
\[
c_{n,p,a} =\lt[\frac{p(a-1)}{(p-1)(n-1)} \lt(\frac{(p-1)(n-1)}{p(a-n) + n-1}\rt)^{\frac1p + \frac{(p-1)(a-n)}{p(a-1)}}\rt]^{\frac1p},
\] 
and $D_{(1,\vec{a})}f$ denotes the derivative of $f$ in the direction $(1,\vec{a})$, i.e.,
\[
D_{(1,\vec{a})}f(t,x) = \pa_t f(t,x) + \la \vec{a}, \na_x f(t,x)\ra.
\]
Given a matrix $A \in {\rm SGL}_{n,+}$ which is determined by a $\lam >0, \vec{a} =(a_1,\ldots,a_{n-1}) \in \R^{n-1}$ and $B \in {\rm GL}_{n-1}$, define $g(t,x) = f(A(t,x)) = f(\lam t, \vec{a} t + B x)$. It is not hard to see that
\begin{equation}\label{eq:affinechange}
\mathcal A_p(g) = |{\rm det} B|^{- \frac{a-p}{p(a-1)}} \lam^{\frac{(p-1)(a-n)}{p(a-1)}}\mathcal A_p(f).
\end{equation}
The affine version of the Gagliardo--Nirenberg trace inequality \eqref{eq:GNtrace} is stated as follows.
\begin{theorem}\label{affineGNtrace}
Let $a \geq n > p >1$. Then it holds
\begin{equation}\label{eq:affineGNtrace}
\lt(\int_{\pa \R^n_+} |f(0,x)|^{\frac{p(a-1)}{a-p}} dx\rt)^{\frac{a-p}{p(a-1)}} \leq \mathcal D_{n,a,p}\,  \mathcal A_p(f)^{\theta} \lt(\int_{\R^n_+} |f(t,x)|^{p\frac{a-1}{a-p}} dxdt\rt)^{(1-\theta)\frac{a-p}{p(a-1)}}
\end{equation}
for any $f \in \mathcal D^{p, \frac{p(a-1)}{a-p}}(\R^n_+)$, where $\theta$ is given by \eqref{eq:theta} and $\mathcal D_{n,a,p}$ is the sharp constant whose value is given by
\begin{align}\label{eq:sharpconstantaffineGN}
\mathcal D_{n,a,p}& = \lt(\frac{p-1}{a-p}\rt)^{\theta} \lt(\frac{p(a-n) + n-p}{p-1}\rt)^{\frac\theta p + (1-\theta) \frac{a-p}{p(a-1)}} \times \notag\\
&\qquad\qquad\qquad \qquad\qquad \times \lt(\frac{\Gamma(a) \Gamma(\frac{n+1}2)}{\pi^{\frac{n-1}2}(a-1) \Gamma(\frac{n-1}q +1) \Gamma(a- \frac{n-1}q -1)}\rt)^{\frac{\theta}{q(a-1)}}.
\end{align}
Moreover, the equality in \eqref{eq:affineGNtrace} holds if and only if 
\begin{equation}\label{eq:extremalGN}
f(t,x) = c\, h_p(A(t, x-x_0))
\end{equation}
for some $c \in \R$, $x_0\in \R^{n-1}$ and $A \in {\rm SGL}_{n,+}$, where $h_p(t,x) =((1+t)^q + |x|^q)^{-\frac{a-p}{p-1}}$.
\end{theorem}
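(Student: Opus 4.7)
The plan is to combine the sharp Euclidean GN trace inequality \eqref{eq:GNtrace} of Theorem \ref{Maintheorem1} with the sharp $L_p$ Busemann--Petty centroid inequality of Lutwak--Yang--Zhang \cite{LYZ2000}, following in spirit the strategy of Haddad \cite{Haddad} and De N\'apoli et al.~\cite{DeNapoli}. Concretely, I would apply \eqref{eq:GNtrace} to a shear-twist of $f$ with respect to a two-parameter family of norms on $\R^n$, indexed by a shear vector $\vec a\in\R^{n-1}$ and a symmetric convex body $K\subset\R^{n-1}$, and then optimize both parameters to recover exactly the affine invariant $\mathcal A_p(f)$.

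First, for $\vec a\in\R^{n-1}$ set $g_{\vec a}(t,x):=f(t,x+t\vec a)$; this is the pull-back of $f$ under the volume-preserving shear $T_{\vec a}(t,x)=(t,x-t\vec a)$, which fixes $\pa\R^n_+$ pointwise. One then checks $\|g_{\vec a}\|_{L^r}=\|f\|_{L^r}$ (with $r=p(a-1)/(a-p)$), $g_{\vec a}(0,\cdot)=f(0,\cdot)$, and by the chain rule $\pa_tg_{\vec a}=(D_{(1,\vec a)}f)\circ T_{\vec a}^{-1}$, $\na_xg_{\vec a}=(\na_xf)\circ T_{\vec a}^{-1}$. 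Next, for a symmetric convex body $K\subset\R^{n-1}$ with Minkowski gauge $\|\cdot\|_K$ and support function $h_K$, the norm $\|(t,x)\|_{K,q}:=(|t|^q+\|x\|_K^q)^{1/q}$ on $\R^n$ has dual norm $(|s|^p+h_K(y)^p)^{1/p}$. Applying \eqref{eq:GNtrace} to $g_{\vec a}$ in this norm yields the parametric family
\[
\Bigl(\int_{\pa\R^n_+}|f(0,x)|^r\,dx\Bigr)^{\frac{a-p}{p(a-1)}}\le D_{n,p,a}\Bigl(\int_{\R^n_+}|D_{(1,\vec a)}f|^p+h_K(\na_xf)^p\,dz\Bigr)^{\theta/p}\|f\|_{L^r}^{(1-\theta)\frac{a-p}{p(a-1)}},
\]
valid for every $\vec a\in\R^{n-1}$ and every symmetric convex body $K\subset\R^{n-1}$.

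The core step is the $K$-optimization. For $K$ of prescribed volume, the sharp $L_p$ Busemann--Petty centroid inequality provides a lower bound of the form $\int_{\R^n_+}h_K(\na_xf)^p\,dz\ge C_{n,p}\,|K|^{p/(n-1)}\mathcal E_p(f)^p$, with equality attained when $K$ is homothetic to the polar of the $L_p$ centroid body of $\na_xf$. A dilation $K\mapsto\lambda K$ then balances the two summands $\int|D_{(1,\vec a)}f|^p\,dz$ and $C_{n,p}|K|^{p/(n-1)}\mathcal E_p(f)^p$ via the elementary inequality $\alpha X+\beta Y\ge c_{\alpha,\beta}\,X^{\alpha/(\alpha+\beta)}\,Y^{\beta/(\alpha+\beta)}$, converting their sum into a product whose exponents are precisely those built into $\mathcal A_p(f)^p$. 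Taking the infimum over $\vec a\in\R^{n-1}$ then yields \eqref{eq:affineGNtrace}; the sharp constant $\mathcal D_{n,p,a}$ and normalizer $c_{n,p,a}$ come out by evaluating $D_{n,p,a}$ on the round $\ell^q$-norm (via the extremizer $h_p$) and combining with the $L_p$ BP constant and the weights from the $\lambda$-balancing.

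For the equality case, by Theorem \ref{Maintheorem2} equality in \eqref{eq:GNtrace} at the optimal $(K^\ast,\vec a^\ast)$ forces $g_{\vec a^\ast}$ to be a translate-dilate of $h_p$ in the norm $\|\cdot\|_{K^\ast,q}$, while equality in the $L_p$ BP inequality forces $K^\ast$ to be an ellipsoid. Undoing the shear $T_{\vec a^\ast}$ together with the linear map sending this ellipsoid to a Euclidean ball produces exactly the family \eqref{eq:extremalGN} with $A\in{\rm SGL}_{n,+}$; conversely, the ${\rm SGL}_{n,+}$-invariance already recorded in \eqref{eq:affinechange} ensures any such $f$ saturates \eqref{eq:affineGNtrace}. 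The main obstacle is the double optimization in Step 3: formulating the $K$-step as a sharp $L_p$ BP centroid inequality directly usable for functions on the half-space, then bookkeeping the exponents through the $\lambda$-balancing and matching them against the definition of $\mathcal A_p(f)$ so as to recover the closed-form expression \eqref{eq:sharpconstantaffineGN} for $\mathcal D_{n,p,a}$.
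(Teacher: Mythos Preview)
Your overall architecture---Euclidean GN \eqref{eq:GNtrace} for a parametric family of norms, the $L_p$ Busemann--Petty centroid inequality, then a scaling optimization---is exactly the paper's, and your shear $g_{\vec a}$ is equivalent (by change of variables) to the paper's choice of building $\vec a$ into the norm $\|\cdot\|_{p,\vec a,f}$. The trouble is in the core $K$-optimization.

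You assert that $L_p$ BP yields a \emph{lower bound} $\int_{\R^n_+} h_K(\na_x f)^p\,dz \ge C_{n,p}\,|K|^{p/(n-1)}\mathcal E_p(f)^p$ for \emph{all} symmetric convex bodies $K$, with equality at some $K^\ast$. But your parametric GN inequality is an \emph{upper} bound for the trace in terms of $\int|D_{(1,\vec a)}f|^p + \int h_K(\na_x f)^p$; to sharpen it you must produce a $K$ making that last integral \emph{small}, i.e.\ you need an \emph{upper} bound for one well-chosen $K$, not a lower bound for all $K$. This is precisely what the paper does: one takes $K=\overline K_p(f)$, the volume-normalized $L_p$ centroid body $\Gamma_p B_p(f)$ (not its polar), and the BP inequality $|\Gamma_p B_p(f)|\ge |B_p(f)|$ gives
\[
\int_{\R^n_+} h_{\overline K_p(f)}(\na_x f)^p\,dz \le \mathcal E_p(f)^p,
\]
with equality iff $B_p(f)$ is an ellipsoid (this is \eqref{eq:aaaa*}). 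In fact your claimed universal lower bound with the sharp constant cannot hold: combined with \eqref{eq:aaaa*} it would force equality in \eqref{eq:aaaa*} for every $f$, contradicting the ellipsoid characterization. So the $K$-step as you have it both points the wrong way and identifies the wrong extremal body.

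A secondary gap: dilating $K\mapsto\lambda K$ changes the volume of the unit ball of $\|\cdot\|_{K,q}$ and hence the constant $D_{n,p,a}$ in \eqref{eq:GNtrace}, which you do not track. The paper sidesteps this by fixing $|\overline K_p(f)|=\omega_{n-1}$ (so the GN constant is the same $\mathcal D_{n,a,p}$ for all $f$ and $\vec a$) and carrying out the balancing via the anisotropic dilation $f\mapsto f(\lambda t,x)$ instead; see \eqref{eq:2nd}--\eqref{eq:lambdaminimize}. Once you replace your ``lower bound for all $K$'' by the paper's ``upper bound at the specific $\overline K_p(f)$'' and move the $\lambda$-optimization to the $t$-variable, the rest of your outline (including the equality analysis via Theorem~\ref{Maintheorem2} and the BP equality case) goes through as written.
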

In the case $a =n$, we then have $\theta =1$ and
\begin{align*}
\mathcal A_p(f)^p& = \frac{p}{(p-1)^{\frac{p-1}p}} \mathcal E_p(f)^{p-1} \inf_{\vec{a} \in \R^{n-1}} \|D_{(1,\vec{a})} f\|_p \leq \frac{p}{(p-1)^{\frac{p-1}p}} \mathcal E_p(f)^{p-1} \|\pa_t f\|_p.
\end{align*}
Hence, the inequality \eqref{eq:affineGNtrace} is stronger than the inequality \eqref{eq:affineSobolevtrace} when $a =n$. Moreover, by Young inequality and $\mathcal E_p(f)^p \leq \int_{\R^n_{+}} |\na_x f|^p dx dt$, we can easily show that 
\[
\mathcal A_p(f)^p \leq \int_{\R^n_+} (|\pa _t f|^p + |\na_x f|^p) dx dt,
\]
Hence the inequality \eqref{eq:affineGNtrace} is stronger than the inequality \eqref{eq:GNtrace} with the norm $\|(t,x)\| = \lt(|t|^q + |x|^q\rt)^{\frac1q}$. In particular, for $p=q=2$, the inequality \eqref{eq:affineGNtrace} is stronger than the inequality \eqref{eq:GNtrace} with the euclidean norm. Furthermore, for the case $p\not=2$, the inequality \eqref{eq:affineGNtrace} does not imply the sharp Gagliardo--Nirenberg trace inequality \eqref{eq:GNtrace} with the euclidean norm because the set of extremal functions of two these inequalities are different.

The rest of this paper is organized as follows. In the next section, we use the mass transportation method to prove Theorem \ref{Maintheorem1}. Theorem \ref{Maintheorem2} will be proved in section 3 by refining the step in proof of Theorem \ref{Maintheorem1}. In section 4, we prove the sharp affine Gagliardo--Nirenberg trace inequality and determine the extremal functions (i.e., proving Theorem \ref{affineGNtrace}).

\section{Proof of Theorem \ref{Maintheorem1}}
In this section, we give the proof of Theorem \ref{Maintheorem1} via the mass transportation method. Remark that $\|\na |f|\|_* \leq \|\na f\|_*$ for $f \in \mathcal D^{p,\frac{p(a-1)}{a-p}}(\R^n_+)$, then it suffices to prove Theorem \ref{Maintheorem1} for nonnegative functions. We start by proving the inequality \eqref{eq:dual} for $f\in \mathcal D^{p,\frac{a p}{a-p}}(\R^n_+)$ and $g\in L^{\frac{ap}{a-p}}(\R^n_+)$ which are compactly supported, nonnegative, smooth functions such that $\|f\|_{\frac{ap}{a-p}} = \|g\|_{\frac{ap}{a-p}} = \|h_p\|_{\frac{ap}{a-p}}$ (Note that this assumption makes sense since $\mathcal D^{p,\frac{p(a-1)}{a-p}}(\R^n_+) \hookrightarrow L^{\frac{ap}{a-p}}(\R^n_+)$ by the Gagaliardo--Nirenberg inequality on half space). The extra assumptions imposed on $f$ and $g$ ensure some regularity properties of the Brenier map pushing $f^{\frac{ap}{a-p}} dz$ forward to $g^{\frac{ap}{a-p}} dz$. Indeed, let $\varphi$ be such a Brenier map. As was shown in \cite{MV2005}, the convex function $\vphi$ can be assumed to have as its domain the whole space $\R^n$ since $g$ is compactly supported in $\R^n_+$. Thus $\na \vphi$ has locally bounded variation in $\R^n_+$ up to the boundary $\pa \R^n_+$. By \eqref{eq:MAequation}, $\na\vphi$ satisfies the quation 
\[
f^{\frac{ap}{a-p}}(z) = g^{\frac{ap}{a-p}}(\na \vphi(z))\, \text{\rm det}\, (D^2_A\vphi(z)),
\]
for $f^{\frac{ap}{a-p}} dz$-a.e. $z$. The preceding equation together with the definition of mass transportation gives
\begin{align}\label{eq:useBrenier}
\int_{\R^n_+} g^{\frac{p(a-1)}{a-p}} dz &= \int_{\R^n_+} g^{\frac{ap}{a-p}} g^{-\frac{p}{a-p}} dz\notag\\
&=\int_{\R^n_+} f(z)^{\frac{ap}{a-p}} g(\na\vphi(z))^{-\frac{p}{a-p}} dz\notag\\
&=\int_{\R^n_+} f(z)^{\frac{p(a-1)}{a-p}} \lt[\text{\rm det}\,(D^2_A \vphi(z))\rt]^{\frac1a} dz.
\end{align}
By the arithmetic--geometric inequality, we have
\begin{align}\label{eq:AGinequality}
\lt[\text{\rm det}\,(D^2_A \vphi(z))\rt]^{\frac1a} &=\lt[\lt[\text{\rm det}\,(D^2_A \vphi(z))\rt]^{\frac1n}\rt]^{\frac na} \notag\\
&\leq \frac{n}{a} \lt[\text{\rm det}\,(D^2_A \vphi(z))\rt]^{\frac1n} + \frac{a-n}a\notag\\
&\leq \frac1a \Delta_A\vphi(z) + \frac{a-n}a.
\end{align}
Plugging \eqref{eq:AGinequality} into \eqref{eq:useBrenier} and using the inequality $\De_A \vphi \leq \De_{\mathcal D'}\vphi$ where $\De_{\mathcal D'}\vphi$ denotes the distributional Laplacian of $\vphi$, we get
\begin{align}\label{eq:AlekLap}
\int_{\R^n_+} g^{\frac{p(a-1)}{a-p}} dz &\leq \frac{a-n}a \int_{\R^n_+} f(z)^{\frac{p(a-1)}{a-p}} dz + \frac1a \int_{\R^n_+} f(z)^{\frac{p(a-1)}{a-p}}\, \Delta_A\vphi(z) dz\notag\\
&\leq\frac{a-n}a \int_{\R^n_+} f(z)^{\frac{p(a-1)}{a-p}} dz + \frac1a \int_{\R^n_+} f(z)^{\frac{p(a-1)}{a-p}}\, \Delta_{\mathcal D'}\vphi(z) dz\notag\\
&\leq \frac{a-n}a \int_{\R^n_+} f(z)^{\frac{p(a-1)}{a-p}} dz + \frac1a \int_{\R^n_+} f(z)^{\frac{p(a-1)}{a-p}}\, \Delta_{\mathcal D'}\psi(z) dz,
\end{align}
where $\psi(z) = \vphi(z) + e\cdot z$ which has the same distributional Laplacian as $\vphi$. Note that $f$ is smooth and has compact support, then we can use the integration by parts formula for functions of bounded variation. This leads to
\begin{equation}\label{eq:IBPBV0}
\int_{\R^n_+} f(z)^{\frac{p(a-1)}{a-p}}\, \Delta_{\mathcal D'}\psi(z) dz = -\frac{p(a-1)}{a-p} \int_{\R^n_+} \na f \cdot \na \psi f^{\frac{a(p-1)}{a-p}} dz - \int_{\pa \R^n_+} f^{\frac{p(a-1)}{a-p}} \na\psi \cdot e dx,
\end{equation}
since $-e$ is the exterior normal vector to $\pa \R^n_+$. By definition of the mass transportation, $\na\vphi \in \R^n_+$ which means that $\na \vphi \cdot e \geq 0$ on $\pa \R^n_+$. Since $e \cdot e =1$, we get
\begin{equation}\label{eq:IBPBV}
\int_{\R^n_+} f(z)^{\frac{p(a-1)}{a-p}}\, \Delta_{\mathcal D'}\psi(z) dz \leq -\frac{p(a-1)}{a-p} \int_{\R^n_+} \na f \cdot \na \psi f^{\frac{a(p-1)}{a-p}} dz - \int_{\pa \R^n_+} f^{\frac{p(a-1)}{a-p}} dx.
\end{equation}
Combining \eqref{eq:AlekLap} and \eqref{eq:IBPBV}, we arrive
\begin{align}\label{eq:IBPBV*}
a\int_{\R^n_+} g^{\frac{p(a-1)}{a-p}} dz &\leq (a-n) \int_{\R^n_+} f(z)^{\frac{p(a-1)}{a-p}} dz -\frac{p(a-1)}{a-p} \int_{\R^n_+} \na f \cdot \na \psi f^{\frac{a(p-1)}{a-p}} dz\notag\\
&\qquad\qquad -\int_{\pa \R^n_+} f^{\frac{p(a-1)}{a-p}} dx.
\end{align}
Applying H\"older inequality and the definition of mass transportation, we obtain
\begin{align}\label{eq:YoungBre}
- \int_{\R^n_+} \na f \cdot \na \psi f^{\frac{a(p-1)}{a-p}} dz & \leq \lt(\int_{\R^n_+} \|\na f\|_*^p dz\rt)^{\frac1p}\lt( \int_{\R^n_+} \|\na \psi\|^{\frac p{p-1}} f^{\frac {ap}{a-p}} dz\rt)^{\frac{p-1}p}\notag\\
 &=\lt(\int_{\R^n_+} \|\na f\|_*^p dz\rt)^{\frac1p}\lt( \int_{\R^n_+} \|z+e\|^{\frac p{p-1}} g^{\frac {ap}{a-p}} dz\rt)^{\frac{p-1}p}.
\end{align}
Inserting \eqref{eq:YoungBre} into \eqref{eq:IBPBV*}, we get
\begin{align}\label{eq:IBPBV**}
a\int_{\R^n_+} g^{\frac{p(a-1)}{a-p}} dz &\leq  -\int_{\pa \R^n_+} f^{\frac{p(a-1)}{a-p}} dx + (a-n) \int_{\R^n_+} f(z)^{\frac{p(a-1)}{a-p}} dz \notag\\
&\qquad + \frac{p(a-1)}{a-p}\lt(\int_{\R^n_+} \|\na f\|_*^p dz\rt)^{\frac1p}\lt( \int_{\R^n_+} \|z+e\|^{\frac p{p-1}} g^{\frac {ap}{a-p}} dz\rt)^{\frac{p-1}p}.
\end{align}
Since the Brenier map $\na\vphi$ does not appear in \eqref{eq:IBPBV**}, we can remove the compactness and smoothness assumptions imposed on $f$ and $g$ at this stage by density argument. The estimate \eqref{eq:IBPBV**} hence remains hold for any nonnegative functions $f\in \mathcal D^{p,\frac{p(a-1)}{a-p}}(\R^n_+)$ and $g\in L^{\frac{ap}{a-p}}(\R^n_+)$ satisfying $\|f\|_{\frac{ap}{a-p}} = \|g\|_{\frac{ap}{a-p}} = \|h_p\|_{\frac{ap}{a-p}}$. Let us remark that when $f =g =h_p$ then $\na\vphi(z) = z$ which implies the equality in \eqref{eq:IBPBV**}.

By Young inequality, we have
\begin{align}\label{eq:Young1}
\lt(\int_{\R^n_+} \|\na f\|_*^p dz\rt)^{\frac1p}&\lt( \int_{\R^n_+} \|z+e\|^{\frac p{p-1}} g^{\frac {ap}{a-p}} dz\rt)^{\frac{p-1}p} \notag\\
&\leq \frac{a-p}p \int_{\R^n_+} \|z+e\|^{\frac p{p-1}} g^{\frac {ap}{a-p}} dz + \frac1p \frac{(p-1)^{p-1}}{(a-p)^{p-1}} \int_{\R^n_+} \|\na f\|_*^p dz,
\end{align}
with equality if $f=g=h_p$. Therefore, the inequality \eqref{eq:dual} follows from the estimates \eqref{eq:IBPBV**} and \eqref{eq:Young1}, with equality if $f=g=h_p$. This leads to the dual principle \eqref{eq:dualprin}.

It remains to prove the Gagliardo--Nirenberg trace inequality \eqref{eq:GNtrace}. Since
\[
\sup_{\|g\|_{\frac{ap}{a-p}} = \|h_p\|_{\frac{ap}{a-p}}}\Big(a\int_{\R^n_+}|g|^{\frac{p(a-1)}{a-p}} dz - (a-1)\int_{\R^n_+} \|z+e\|^{\frac p{p-1}} |g|^{\frac{ap}{a-p}} dz\Big) = \int_{\R^n_+} h_p^{\frac{p(a-1)}{a-p}} dz =:A.
\]
Then we have
\begin{equation}\label{eq:nonhomo1}
\int_{\pa \R^n_+} |f|^{\frac{p(a-1)}{a-p}} dx \leq (a-n) \int_{\R^n_+} |f|^{\frac{p(a-1)}{a-p}} dz +\frac{(a-1)(p-1)^{p-1}}{(a-p)^p}\int_{\R^n_+} \|\na f\|_*^p dz -A,
\end{equation}
with equality if $f =h_p$. Using again Young inequality, we get
\begin{align}\label{eq:Young2}
\frac{(a-1)(p-1)^{p-1}}{(a-p)^p}\int_{\R^n_+} \|\na f\|_*^p dz&=\frac{(a-1)^{\frac{a-p}{a-1}}(p-1)^{\frac{a(p-1)}{a-1}}}{(a-p)^p}\frac{\int_{\R^n_+}\|\na f\|_*^p dz}{A^{\frac{p-1}{a-1}}} \lt(\frac{a-1}{p-1} A\rt)^{\frac{p-1}{a-1}}\notag\\
&\leq A + \lt(\frac{p-1}{(a-p)A^{\frac1a}}\rt)^{\frac{a(p-1)}{a-p}} \lt(\int_{\R^n_+} \|\na f\|_*^p dz\rt)^{\frac{a-1}{a-p}},
\end{align}
with equality if $f =h_p$. Combining \eqref{eq:Young2} with \eqref{eq:nonhomo1} yields
\begin{align}\label{eq:nonhomo2}
\int_{\pa \R^n_+} |f|^{\frac{p(a-1)}{a-p}} dx &\leq (a-n) \int_{\R^n_+} |f|^{\frac{p(a-1)}{a-p}} dz + \lt(\frac{p-1}{(a-p)A^{\frac1a}}\rt)^{\frac{a(p-1)}{a-p}} \lt(\int_{\R^n_+} \|\na f\|_*^p dz\rt)^{\frac{a-1}{a-p}},
\end{align}
with equality if $f =h_p$. If $a=n$, the inequality \eqref{eq:nonhomo2} gives the sharp Sobolev trace inequalities due to Nazaret \cite{Nazaret2006}. If $a>n$, denote $B = \lt(\frac{p-1}{(a-p)A^{\frac1a}}\rt)^{\frac{a(p-1)}{a-p}}$ and $f_\lam = \lam^{\frac{n(a-p)}{ap}} f(\lam\,\cdot )$, for $\lam >0$. It is obvious that 
\[
\int_{\R^n_+} |f_\lam|^{\frac{ap}{a-p}} dz = \int_{\R^n_+} |f|^{\frac{ap}{a-p}} dz =\int_{\R^n_+} h_p^{\frac{ap}{a-p}} dz,
\]
\[
\int_{\pa \R^n_+} |f_\lam|^{\frac{p(a-1)}{a-p}} dx = \lam^{1-\frac{n}a}\int_{\pa \R^n_+} |f|^{\frac{p(a-1)}{a-p}} dx,
\]
\[
\int_{\R^n_+} |f_\lam|^{\frac{p(a-1)}{a-p}} dz = \lam^{-\frac na} \int_{\R^n_+} |f|^{\frac{p(a-1)}{a-p}} dz,
\]
and
\[
\int_{\R^n_+} \|\na f_\lam\|_*^p dz = \lambda^{p\lt(1-\frac na\rt)} \int_{\R^n_+} \|\na f\|_*^p dz.
\]
Replacing $f$ by $f_\lam$ in \eqref{eq:nonhomo2}, we get
\begin{align}\label{eq:lambda}
\int_{\pa \R^n_+} |f|^{\frac{p(a-1)}{a-p}} dx &\leq \lam^{-1}(a-n) \int_{\R^n_+} |f(z)|^{\frac{p(a-1)}{a-p}} dz \notag\\
&\quad + \lam^{\frac{(a-n)(p-1)}{a-p}}\lt(\frac{p-1}{(a-p)A^{\frac1a}}\rt)^{\frac{a(p-1)}{a-p}} \lt(\int_{\R^n_+} \|\na f\|_*^p dz\rt)^{\frac{a-1}{a-p}},
\end{align}
for any $\lambda >0$. The estimate \eqref{eq:lambda} is exactly formula $(46)$ in \cite{BCFGG17} with a different normalization condition on $f$. Optimizing the right hand side of \eqref{eq:lambda} over $\lambda >0$, we obtain the inequality \eqref{eq:GNtrace} for any function $f$ with $\|f\|_{\frac{ap}{a-p}} = \|h_p\|_{\frac{ap}{a-p}}$. By homogeneity, the inequality \eqref{eq:GNtrace} holds for any function $f$.

The optimality of \eqref{eq:GNtrace} is checked directly by using function $h_p$. Indeed, the equality holds true in \eqref{eq:nonhomo2} if $f =h_p$ and $\lambda =1$. Note that
\[
\int_{\R^n_+} \|\na h_p\|_*^p dz = \frac{(a-p)^p}{(p-1)^p} A,
\]
hence the right hand side of \eqref{eq:nonhomo2} is minimized at $\lam =1$ when $f =h_p$. This proves the optimality of \eqref{eq:GNtrace}. The proof of Theorem \ref{Maintheorem1} was completely finished.

\section{Proof of Theorem \ref{Maintheorem2}}
This section is devoted to prove Theorem \ref{Maintheorem2} concerning to the extremal functions for the sharp Gagliardo--Nirenberg trace inequality \eqref{eq:GNtrace}. We follow the arguments in \cite{CNV2004} dealing with the identification of the extremal functions for the sharp Sobolev inequality. To do this, we trace back to the equality cases in the estimates used in the previous section on the Monge--Amp\`ere equation \eqref{eq:MAequation}, the definition of mass transportation, the arithmetic--geometric inequality \eqref{eq:AGinequality}, H\"older inequality \eqref{eq:YoungBre}, the Young inequalities \eqref{eq:Young1} and \eqref{eq:Young2}, and the integration by parts formula \eqref{eq:IBPBV0}. We remark that the integration by parts formula \eqref{eq:IBPBV0} is valid under the extra assumptions on $f$ and $g$ which ensure the regularity of the mass transport. If $g$ is only in $L^{\frac{ap}{a-p}}(\R^n_+)$ and is not compactly supported, then the normal derivative of $\vphi$ has no reason even to exist on the boundary. So the integration by parts formula \eqref{eq:IBPBV0} does not exists in this case. The first step in determining the optimal functions for the Gagliardo--Nirenberg trace inequality \eqref{eq:GNtrace} is to establish an integration by parts inequality which holds for more general $f$ and $g$ without extra assumptions on the smoothness or the compactly supported property. This is the content of the following lemma. 

\begin{lemma}\label{IBPinequality}
Let $f,g \in \mathcal D^{p, \frac{p(a-1)}{a-p}}(\R^n_+)$ be nonnegative function such that $\|f\|_{\frac{ap}{a-p}} = \|g\|_{\frac{ap}{a-p}}$ and $\int_{\R^n_+} \|z+e\|^{\frac p{p-1}} g^{\frac{ap}{a-p}} dz < \infty$. Let $\na\vphi$ be the Brenier map pushing $f^{\frac{ap}{a-p}} dz$ forward to $g^{\frac{ap}{a-p}} dz$. Then, it holds
\begin{equation}\label{eq:IBPinequality}
\int_{\R^n_+} f(z)^{\frac{p(a-1)}{a-p}}\, \Delta_A\psi(z) dz + \int_{\pa \R^n_+} f(x)^{\frac{p(a-1)}{a-p}} dx \leq -\frac{p(a-1)}{a-p} \int_{\R^n_+} \na f \cdot \na\psi f^{\frac{p(a-1)}{a-p}} dz,
\end{equation}
here $\psi(z) = \phi(z) + z\cdot e$.
\end{lemma}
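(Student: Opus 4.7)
The essential work has already been done in Section~2 in the more regular setting. Under the extra assumption that $f$ and $g$ are smooth and compactly supported, the chain running from \eqref{eq:IBPBV0} through \eqref{eq:IBPBV}, combined with $\Delta_A\psi\le \Delta_{\mathcal D'}\psi$ and the positivity $\nabla\varphi\cdot e\ge 0$ on $\partial\R^n_+$ (hence $\nabla\psi\cdot e\ge 1$ there), already produces exactly \eqref{eq:IBPinequality}. The real content of the lemma is therefore to extend this inequality from the smooth, compactly supported setting to general $f,g\in\mathcal D^{p,p(a-1)/(a-p)}(\R^n_+)$ satisfying the stated moment hypothesis on $g$, and the plan is to do this by a density argument.

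Concretely, the first step is to construct smooth, compactly supported, nonnegative approximations $f_k\to f$ and $g_k\to g$ with $\|f_k\|_{ap/(a-p)}=\|g_k\|_{ap/(a-p)}$, such that $f_k\to f$ in $\mathcal D^{p,p(a-1)/(a-p)}(\R^n_+)$ and in the trace sense on $\partial\R^n_+$, and such that $g_k\to g$ pointwise with the moment convergence $\int\|z+e\|^{p/(p-1)}g_k^{ap/(a-p)}dz\to\int\|z+e\|^{p/(p-1)}g^{ap/(a-p)}dz$; this is precisely where the finiteness hypothesis on the moment of $g$ is used. For each~$k$, let $\nabla\varphi_k$ be the Brenier map sending $f_k^{ap/(a-p)}dz$ to $g_k^{ap/(a-p)}dz$ and set $\psi_k=\varphi_k+e\cdot z$. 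Then \eqref{eq:IBPinequality} holds for $(f_k,g_k,\psi_k)$ by the reasoning of Section~2 already recalled.

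The next step is to pass to the limit. Stability of optimal transport gives $\nabla\varphi_k\to\nabla\varphi$ almost everywhere along a subsequence. The pushforward identity
\[
\int_{\R^n_+}\|\nabla\psi_k\|^{p/(p-1)}f_k^{ap/(a-p)}dz=\int_{\R^n_+}\|z+e\|^{p/(p-1)}g_k^{ap/(a-p)}dz,
\]
together with the moment convergence above, yields equi-integrability of $\|\nabla\psi_k\|$ against $f_k^{ap/(a-p)}dz$, and then H\"older's inequality combined with $\nabla f_k\to \nabla f$ in $L^p$ gives convergence of the right-hand integral of \eqref{eq:IBPinequality}. On the left-hand side, the boundary integral passes by continuity of the trace, while for the Aleksandrov-Laplacian term one appeals to the lower semicontinuity of $\psi\mapsto\int f^{p(a-1)/(a-p)}\Delta_A\psi\,dz$ under $L^1_{\mathrm{loc}}$-convergence of the convex functions $\psi_k\to\psi$; this suffices because the direction of the inequality is favorable.

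The main obstacle will be precisely this Aleksandrov-Laplacian passage: $\Delta_A\psi_k$ is only the absolutely continuous part of the distributional Laplacian, and mass can migrate to a singular part in the limit, so $\Delta_A$ is not weakly continuous as a functional of the convex potential. The saving grace is that only lower semicontinuity of the left-hand side of \eqref{eq:IBPinequality} is needed to transport the inequality through the limit; any singular mass appearing in $\Delta_{\mathcal D'}\psi-\Delta_A\psi$ only strengthens the bound. The remaining technicalities---constructing the approximations so as to respect all the normalizations simultaneously, regularity of the approximating Brenier maps, and convergence of the boundary trace---are handled by standard density and dominated-convergence arguments.
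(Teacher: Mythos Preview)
Your strategy differs from the paper's in a crucial way. The paper does \emph{not} approximate $g$ or vary the Brenier map: it keeps $g$, and hence $\nabla\varphi$ and $\Delta_A\psi$, fixed throughout, and approximates only $f$ by a three-parameter family $f_{k,\epsilon,\delta}$ (cutoff near $\partial\R^n_+$ via $\theta(kt)$, truncation to a compact subset of $\Omega=\mathrm{int}\{\varphi<\infty\}$ following \cite{CNV2004}, then mollification). Integration by parts is performed on half-spaces $\{t\ge t_0\}$, and one passes the limits $\delta\to 0$, $\epsilon\to 0$, $k\to\infty$, $t_0\to 0$ in turn. Because $\Delta_A\psi\ge 0$ is a \emph{fixed} function, Fatou's lemma handles that term with no further input; the boundary term is controlled by applying the already-established Gagliardo--Nirenberg trace inequality to $z\mapsto f(z+t_0e)-f(z)$.

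Your route of approximating both $f$ and $g$ and invoking stability of optimal transport has a genuine gap at precisely the step you flag as the main obstacle. The claimed lower semicontinuity of $\psi\mapsto\int f^{p(a-1)/(a-p)}\Delta_A\psi\,dz$ under $L^1_{\mathrm{loc}}$ convergence of convex $\psi_k\to\psi$ is false in general: piecewise affine convex $\psi_k$ converging locally uniformly to $\psi(z)=|z|^2$ satisfy $\Delta_A\psi_k\equiv 0$ (their distributional Hessians live on the $(n-1)$-dimensional skeleton) while $\Delta_A\psi=2n$, so the left-hand side collapses to zero along the sequence. Your heuristic that ``any singular mass appearing in $\Delta_{\mathcal D'}\psi-\Delta_A\psi$ only strengthens the bound'' addresses the opposite scenario---smooth $\psi_k$ limiting to a $\psi$ with singular Hessian---and does nothing to rule out singular Hessian mass of the $\psi_k$ spreading into the absolutely continuous part of the limit. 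One might attempt to force each $\varphi_k$ to be $C^2$ via Caffarelli's regularity theory by taking $g_k$ smooth and positive on convex supports, but this is a substantial additional ingredient absent from your sketch, and even then the joint limit in $f_k$ (merely in $\mathcal D^{p,p(a-1)/(a-p)}$) and in the weakly converging Hessian measures would require a careful argument. Freezing the potential, as the paper does, sidesteps all of this.
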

\begin{proof}
Since $\int_{\R^n_+} g^{\frac{ap}{a-p}} \|z+e\|^{\frac p{p-1}} dz < \infty$, by definition of the mass transport, we have 
\begin{equation}\label{eq:brefi}
\int_{\R^n_+} f(z)^{\frac{ap}{a-p}} \|\na\psi(z)\|^{\frac p{p-1}} dz < \infty.
\end{equation}
Let $\Om$ be the interior of $\{z\in \R^n: \vphi(z) < \infty\}$, then the support of $f$ is contained in $\o \Om$, and $\pa \Om$ is of zero measure. The function $\vphi$ is convex, hence it is differentiable a.e. in $\Om$. By Fubini theorem, for a.e. $t>0$, $\na \vphi$ exists a.e. in $\Om(t) = \Om \cap (\{t\} \times \R^{n-1})$ with respect to $(n-1)$-dimensional Lebesgue measure. Let $t_0 >0$ be such a number, and fix $z_0 = (t_0,x_0) \in \Om \cap \R^n_+$. Taking $k_0$ such that $1/k_0 < t_0$, and for $k\geq k_0$ we define $f_k(z) = f(z)\theta(k t)$, $z =(t,x)$, where $\theta$ is a smooth, increasing function on $[0,\infty)$ such that $0\leq \theta\leq 1$, $\theta =0$ on $[0,1]$ and $\theta =1$ on $[2,\infty)$. Thus, the support of $f_k$ is contained in $\{z =(t,x): t \geq 1/k\}$. For $\ep >0$ small enough ($\ep \ll t_0 -1/k_0$), we define
\[
f_{k,\ep}(z) = \min\lt\{f_k\lt(z_0 + \frac{z-z_0}{1-\ep}\rt), \, f_k(z) \chi(\ep z)\rt\},
\]
where $\chi$ is a $C^\infty$ cut-off function, $\chi(z) =1$ if $|z|\leq 1/2$ and $\chi(z) =0$ if $|z|\geq 1$ and $0 \leq \chi\leq 1$. For $\de >0$, denote $f_{k,\ep,\de} = f_{k,\ep} \ast \eta_\de$, where $\eta_\de = \de^{-n} \eta(\de^{-1}\,\cdot)$ with $\eta\in C_0^\infty(\R^n)$, $\eta \geq 0$ and $\int_{\R^n} \eta(z) dz =1$. For $\de$ small enough ($\de$ is smaller than the distance from the support of $f_{k,\ep}$ to $\pa \Om$), $f_{k,\ep,\de}$ is compactly supported in $\Om$ and smooth, i.e., $f_{k,\ep,\de} \in C_0^\infty(\Om)$.
Since $\Delta_A \psi \leq \Delta_{\mathcal D'} \psi$, and $\na \vphi$ has locally bounded variation in $\Om$, applying integration by parts for functions of bounded variation, we have
\[
\int_{\{t\geq t_0\}} f_{k,\ep,\de}^{\frac{p(a-1)}{a-p}} \Delta_A\psi dz \leq -\frac{p(a-1)}{a-p}\int_{\{t\geq t_0\}} \na f_{k,\ep,\de} \cdot \na \psi f_{k,\ep,\de}^{a\frac{p-1}{a-p}} dz -\int_{\{t=t_0\}} f_{k,\ep,\de}^{\frac{p(a-1)}{a-p}} \na \psi \cdot e dx.
\]
Since $\na \vphi \in \R^n_+$ a.e., then $\na\vphi \cdot e \geq 0$ which together with the previous estimate implies
\[
\int_{\{t\geq t_0\}} f_{k,\ep,\de}^{\frac{p(a-1)}{a-p}} \Delta_A\psi dz \leq  -\frac{p(a-1)}{a-p}\int_{\{t\geq t_0\}} \na f_{k,\ep,\de} \cdot \na \psi f_{k,\ep,\de}^{a\frac{p-1}{a-p}} dz -\int_{\{t=t_0\}} f_{k,\ep,\de}^{\frac{p(a-1)}{a-p}} dx,
\]
or equivalently
\begin{equation}\label{eq:IBP}
\int_{\{t\geq t_0\}} f_{k,\ep,\de}^{\frac{p(a-1)}{a-p}} \Delta_A\psi dz + \int_{\{t=t_0\}} f_{k,\ep,\de}^{\frac{p(a-1)}{a-p}} dx \leq -\frac{p(a-1)}{a-p}\int_{\{t\geq t_0\}} \na f_{k,\ep,\de} \cdot \na \psi f_{k,\ep,\de}^{a\frac{p-1}{a-p}} dz.
\end{equation}
For $\de$ is sufficient small, the support of $f_{k,\ep,\de}$ is contained in $\Om' \subset\subset \Om$, hence $\na\psi$ is bounded uniformly on the support of $f_{k,\ep,\de}$. Moreover, $f_{k,\ep,\de} \to f_{k,\ep}$ in $\mathcal D^{p,\frac{p(a-1)}{a-p}}(\R^n_+)$, by Gagliardo--Nirenberg trace inequality, we get $f_{k,\ep,\de} \to f_{k,\ep}$ in $L^{\frac{p(a-1)}{a-p}}(\{t_0\}\times \R^{n-1})$ when $\de \to 0$. Letting $\de \to 0$ in \eqref{eq:IBP} and using Fatou's lemma and $\De_A \psi =\De_A \vphi \geq 0$, we get
\begin{equation}\label{eq:IBP1}
\int_{\{t\geq t_0\}} f_{k,\ep}^{\frac{p(a-1)}{a-p}} \Delta_A\psi dz + \int_{\{t=t_0\}} f_{k,\ep}^{\frac{p(a-1)}{a-p}} dx \leq -\frac{p(a-1)}{a-p}\int_{\{t\geq t_0\}} \na f_{k,\ep} \cdot \na \psi f_{k,\ep}^{a\frac{p-1}{a-p}} dz.
\end{equation}
It remains to pass $\ep \to 0$ in \eqref{eq:IBP1}. Repeating the argument in the proof of Lemma $7$ in \cite{CNV2004} and using Gagliardo--Nirenberg trace inequalities together with \eqref{eq:brefi}, we obtain
\begin{equation}\label{eq:IBP2}
\int_{\{t\geq t_0\}} f_{k}^{\frac{p(a-1)}{a-p}} \Delta_A\psi dz + \int_{\{t=t_0\}} f_{k}^{\frac{p(a-1)}{a-p}} dx \leq -\frac{p(a-1)}{a-p}\int_{\{t\geq t_0\}} \na f_{k} \cdot \na \psi f_{k}^{a\frac{p-1}{a-p}} dz.
\end{equation}
Since $\na f_k(t,x) = \theta(kt) \na f(t,x) + f(t,x) k\theta'(kt)$ and $\theta$ is increasing, then
\[
-\na f_k(t,x) \cdot \na \psi \leq -\theta(tk) \na f(t,x) \cdot \na \psi(t,x).
\]
Letting $k\to\infty$ in \eqref{eq:IBP2}, we get
\begin{equation}\label{eq:IBP3}
\int_{\{t\geq t_0\}} f^{\frac{p(a-1)}{a-p}} \Delta_A\psi dz + \int_{\{t=t_0\}} f^{\frac{p(a-1)}{a-p}} dx \leq -\frac{p(a-1)}{a-p}\int_{\{t\geq t_0\}} \na f \cdot \na \psi f^{a\frac{p-1}{a-p}} dz.
\end{equation}
It remains to pass $t_0\to 0$ in \eqref{eq:IBP3}. By Fatou's lemma and the fact $\na f \cdot \na \psi f^{a\frac{p-1}{a-p}} \in L^1(\R^n_+)$ by \eqref{eq:brefi}, it is enough to show that
\begin{equation}\label{eq:suffcond}
\lim_{t_0 \to 0}\int_{\{t=t_0\}} f^{\frac{p(a-1)}{a-p}} dx = \int_{\pa \R^n_+} f^{\frac{p(a-1)}{a-p}} dx.
\end{equation}
For $t >0$, define the function $h_t$ on $\R^n_+$ by $h_t(z) =f(z+te) -f(z)$. It is not hard to see that
\[
\int_{\R^n_+} \|\na h_t\|_*^p dz = \int_{\R^n_+} \|\na f(z+te) -\na f(z)\|_*^p dz \to 0,
\]
and
\[
\int_{\R^n_+} |h_t(z)|^{\frac{p(a-1)}{a-p}} dz = \int_{\R^n_+} |f(z+te) -f(z)|^{\frac{p(a-1)}{a-p}} dz \to 0,
\]
as $t\to 0^+$ by the continuity of $L^p-$norm under the translation. Applying Gagliardo--Nirenberg trace inequality, we get
\[
\lim_{t\to 0} \int_{\pa \R^n_+} |f(t,x) -f(x)|^{\frac{p(a-1)}{a-p}} dx =\lim_{t\to 0} \int_{\pa \R^n_+} |h_t|^{\frac{p(a-1)}{a-p}} dx =0.
\]
This implies \eqref{eq:suffcond}.
\end{proof}

Suppose that equality holds true in $\eqref{eq:GNtrace}$ for a function $f\not =0$. It is a standard argument (by writing $f$ as $f = f^+ -f^-$ and using convexity) that $f$ does not change sign, hence without loss of generality, we can assume that $f$ is nonnegative. Note that $\mathcal D^{p,\frac{p(a-1)}{a-p}}(\R^n_+) \subset L^{\frac{ap}{a-p}}(\R^n_+)$, by the homegeneity, we can assume that
\[
\int_{\R^n_+} f^{\frac{ap}{a-p}} dz = \int_{\R^n_+} h_p^{\frac{ap}{a-p}} dz.
\]
Let $\lambda_0$ be the number minimizing the right hand side of \eqref{eq:lambda}. Then \eqref{eq:nonhomo2} becomes equality for $f_{\lambda_0} = \lambda_0^{\frac{n(a-p)}{ap}} f(\lambda_0\, \cdot)$. Let $\na \vphi$ be the Brenier map pushing $f_{\lam_0}^{\frac{ap}{a-p}} dz$ forward to $h_p^{\frac{ap}{a-p}}dz$, and $\Om =\{z\in \R^n: \vphi(z) < \infty\}$. Recall that \eqref{eq:AlekLap} holds in general without any extra assumptions on the regularity of $f$ and $g$. Applying \eqref{eq:AlekLap} and \eqref{eq:IBPinequality} for $f_{\lam_0}$ and $h_p$, we see that \eqref{eq:IBPBV*} still holds for $f_{\lam_0}$ and $h_p$ which is equivalent to 
\begin{align}\label{eq:IBPequality}
\int_{\pa \R^n_+} f_{\lam_0}^{\frac{p(a-1)}{a-p}} dx &\leq (a-n) \int_{\R^n_+} f_{\lam_0}(z)^{\frac{p(a-1)}{a-p}} dz -\frac{p(a-1)}{a-p} \int_{\R^n_+} \na f_{\lam_0} \cdot \na \psi f_{\lam_0}^{\frac{a(p-1)}{a-p}} dz\notag\\
&\qquad\qquad -a\int_{\R^n_+} h_p^{\frac{p(a-1)}{a-p}} dz.
\end{align}
By H\"older inequality and Young inequality, \eqref{eq:IBPequality} leads to the inequalities \eqref{eq:IBPBV**} and \eqref{eq:nonhomo2} for $f_{\lam_0}$ and $h_p$, i.e.,
\begin{align}\label{eq:IBPBV***}
\int_{\pa \R^n_+} f_{\lam_0}^{\frac{p(a-1)}{a-p}} dx &\leq  -a\int_{\R^n_+} h_p^{\frac{p(a-1)}{a-p}} dz + (a-n) \int_{\R^n_+} f_{\lam_0}(z)^{\frac{p(a-1)}{a-p}} dz \notag\\
&\qquad + \frac{p(a-1)}{a-p}\lt(\int_{\R^n_+} \|\na f_{\lam_0}\|_*^p dz\rt)^{\frac1p}\lt( \int_{\R^n_+} \|z+e\|^{\frac p{p-1}} h_p^{\frac {ap}{a-p}} dz\rt)^{\frac{p-1}p}\notag\\
&= -a A + (a-n) \int_{\R^n_+} f_{\lam_0}^{\frac{p(a-1)}{a-p}} dz + \frac{p(a-1)}{a-p} A^{\frac{p-1}p}\lt(\int_{\R^n_+} \|\na f_{\lam_0}\|_*^p dz\rt)^{\frac1p},
\end{align}
and
\begin{align}\label{eq:nonhomo2*}
\int_{\pa \R^n_+} f_{\lam_0}^{\frac{p(a-1)}{a-p}} dx &\leq (a-n) \int_{\R^n_+} f_{\lam_0}(z)^{\frac{p(a-1)}{a-p}} dz \notag\\
&\qquad\qquad  + \lt(\frac{p-1}{(a-p)A^{\frac1a}}\rt)^{\frac{a(p-1)}{a-p}} \lt(\int_{\R^n_+} \|\na f_{\lam_0}\|_*^p dz\rt)^{\frac{a-1}{a-p}}.
\end{align}
As discussed above, we have equality in \eqref{eq:nonhomo2*}. Hence equality holds in \eqref{eq:IBPBV***}. In particular, we must have equalities in H\"older inequality \eqref{eq:YoungBre} applied to $f_{\lam_0}$ and in the arithmetic--geometric inequality \eqref{eq:AGinequality}. Equality in H\"older inequality implies
\[
\|\na f_{\lam_0}(z)\|_*^p = k \|\na \psi(z)\|^{\frac p{p-1}} f_{\lam_0}(z)^{\frac{ap}{a-p}},
\]
for almost every $z\in \Om \cap \R^n_+$. This equality and the argument in \cite{CNV2004} shows that $f_{\lam_0}$ is positive on $\Om\cap \R^n_+$ and the support of $f_{\lam_0}$ is $\o\Om \cap \o{\R^n_+}$.

Using again the argument in \cite{CNV2004} we conclude that $D^2_{\mathcal D'}\vphi$ has no singular part in $\Om \cap \R^n_+$, i.e., $D^2_{\mathcal D'}\vphi$ is absolutely continuous with respect Lebesgue measure on $\Om\cap \R^n_+$.

If $a > n$, the equality in the arithmetic--geometric inequality implies that $D_{\mathcal D'}^2 \vphi$ is identity matrix almost everywhere in $\Om \cap \R^n_+$, hence $\na\vphi(t,x) = (t-t_0,x-x_0)$ for some $(t_0,x_0)\in \R^n$. Since $\na\vphi$ sends the interior of support of $f_{\lam_0}$ to the interior of support of $h_p$ (which is $\R^n_+$), then the interior of support of $f_{\lam_0}$ is $\R^n_+ + t_0 e$ which implies $t_0 \geq 0$. Therefore, we have $f_{\lam_0}(z) = 1_{\R^n_++t_0 e}(z) h_p(z -(t_0,x_0))$ which forces $t_0 =0$. This means that $f(z) = \lam_0^{-\frac{n(a-p)}{ap}} h_p(\lam_0^{-1}z -(0,x_0))$ as claimed.

If $a=n$, the equality in the arithmetic--geometric inequality implies that $D_{\mathcal D'}^2 \vphi$ is proportional to the identity matrix at almost everywhere in $\Om\cap \R^n_+$. Using regularizing process as done in \cite{CNV2004}, we conclude that $D_{\mathcal D'}^2\vphi$ is a constant multiple of identity matrix in $\Om\cap \R^n_+$. Hence $\na\vphi(t,x) = \lam (t-t_0,x-x_0)$ for some $\lam >0$, $(t_0,x_0)\in \R^n$. Arguing as in the case $a >n$, we get $t_0= 0$, which implies $f(z) = \lt(\frac{\lam}{\lam_0}\rt)^{\frac{n(a-p)}{ap}} h_p(\frac{\lam}{\lam_0}z -(0,\lam x_0))$ as wanted.

\section{Proof of the sharp affine Gagliardo--Nirenberg trace inequalities: Proof of Theorem \ref{affineGNtrace}}
Before proving Theorem \ref{affineGNtrace}, let us recall some useful facts from theory of convex bodies. The book of Schneider \cite{Schneider} is a good reference for this purpose. A subset $K$ of $\R^n$ is called a convex body if it is a compact convex subset of $\R^n$ with non-empty interior. A convex body $K$ is symmetric if $K = -K$. For a convex body $K \subset \R^n$, its support function $h_K$ is defined by
\[
h_K(x) = \max\{\la x,y\ra\, :\, y \in K\}.
\]
It is well-known that a convex body is completely determined by its support function. If $K$ is a symmetric convex body, then its Minkowski functional (or the gauge) $\|\cdot\|_K$ is defined by
\[
\|x\|_K = \inf\{\lam >0\, :\, x \in \lam K\}.
\]
$\|\cdot\|_K$ is a norm in $\R^n$. The polar body $K^\circ$ of $K$ is defined by
\[
K^\circ = \{x\in \R^n\, :\, \la x, y\ra \leq 1, \quad\forall y\in K\}.
\]
It is easy to see that $h_K = \|\cdot\|_{K^\circ}^{-1}$. Moreover, the following formula holds
\begin{equation}\label{eq:volume}
{\rm vol_n}(K) = \frac1n \int_{S^{n-1}} \|y\|_K^{-n} dy,
\end{equation}
here ${\rm vol_k}$ denotes the volume in $\R^k$. For a convex body $K$ in $\R^n$ containing the origin in its interior, its $L_p$ centroid  body $\Gamma_p K$, $p\geq 1$ is defined by
\[
h_{\Gamma_p K}(x)^p = \frac1{a_{n,p} {\rm vol}(K)} \int_K |\la x,y\ra|^p dy,
\]
for $x \in \R^n$, where $a_{n,p} = \frac{\om_{n+p}}{\om_2 \om_n \om_{p-1}}$ is a normalized constant such that $\Gamma_p B_2^n = B_2^n$ with $B_2^n$ denoting the unit ball of $\R^n$. The $L_p$ Busemann--Petty centroid inequality proved by Lutwak, Yang and Zhang (see \cite{LYZ2000}) states that for any convex body $K$ in $\R^n$ containing the origin in its interior we have
\begin{equation}\label{eq:Lpcentroid}
{\rm vol_n}(\Gamma_p K) \geq {\rm vol_n}(K),
\end{equation}
with equality if and only if $K$ is an origin--symmetric ellipsoid. We refer the interest reader to \cite{HS09Iso} for a generalization of the inequality \eqref{eq:Lpcentroid}.

Let $f \in \mathcal D^p(\R^n_+)$ which is not identical $0$. We denote by $B_p(f)$ the unit ball in $\R^{n-1}$ with respect to the norm
\[
\|u\|_{B_p(f)} = \lt(\int_{\R^n_+} |\la u, \na_x f(t,x)\ra|^p dxdt\rt)^{\frac1p}.
\]
From the definition of $\mathcal E_p(f)$ and the formula \eqref{eq:volume}, we get
\begin{equation}\label{eq:volumea}
\mathcal E_p(f) = c_{n-1,p} \lt((n-1) {\rm vol_{n-1}}(B_p(f))\rt)^{-\frac1{n-1}}.
\end{equation}
Let $K_p(f)$ be a convex body in $\R^{n-1}$ whose support function is
\[
h_{K_p(f)}(u) = \lt(\int_{S^{n-2}} \|v\|_{B_p(f)}^{-n-p+1} |\la u, v\ra|^p dv\rt)^{\frac1p}.
\]
Following the proof of Lemmas $3.2$ and $3.3$ in \cite{Nguyen2016}, we have the following identities.
\begin{lemma}\label{relation1}
It holds
\[
K_p(f) = \lt((n+p-1) \al_{n-1,p} {\rm vol_{n-1}}(B_p(f))\rt)^{\frac1p} \Gamma_p B_p(f),
\]
\[
\int_{\R^n_+} h_{K_p(f)}(\na_x f(t,x))^p dx dt  = \lt(\frac{\mathcal E_p(f)}{c_{n-1,p}}\rt)^{1-n}.
\]
\end{lemma}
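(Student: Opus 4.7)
The plan is to prove both identities by direct integration, using polar coordinates for the first and Fubini's theorem for the second. Throughout, I will treat the relevant convex bodies as living in $\R^{n-1}$, so that $S^{n-2}$ is the associated unit sphere and $dv$ is the standard surface measure.

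For the first identity, I would start from the defining integral of the $L_p$ centroid body applied to $K = B_p(f)$:
\[
h_{\Gamma_p B_p(f)}(u)^p = \frac{1}{a_{n-1,p}\,\mathrm{vol}_{n-1}(B_p(f))}\int_{B_p(f)}|\langle u,y\rangle|^p\,dy.
\]
Passing to polar coordinates $y = rv$ with $v\in S^{n-2}$ and $r\in [0,\|v\|_{B_p(f)}^{-1}]$, and using $dy = r^{n-2}dr\,dv$, the radial integral evaluates to $\|v\|_{B_p(f)}^{-(n+p-1)}/(n+p-1)$. Pulling out $|\langle u,v\rangle|^p$ then recovers exactly the defining integral of $h_{K_p(f)}(u)^p$ up to the factor $\tfrac{1}{(n+p-1)a_{n-1,p}\mathrm{vol}_{n-1}(B_p(f))}$, proving the claimed identity between $K_p(f)$ and $\Gamma_p B_p(f)$.

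For the second identity, I would substitute the definition of $h_{K_p(f)}$ into the left-hand side and swap the order of integration:
\[
\int_{\R^n_+}h_{K_p(f)}(\nabla_x f)^p\,dx\,dt = \int_{S^{n-2}}\|v\|_{B_p(f)}^{-(n+p-1)}\left(\int_{\R^n_+}|\langle \nabla_x f(t,x),v\rangle|^p\,dx\,dt\right)dv.
\]
The inner integral is, by definition, exactly $\|v\|_{B_p(f)}^{p}$, so after cancellation the expression becomes $\int_{S^{n-2}}\|v\|_{B_p(f)}^{-(n-1)}\,dv$. By the polar volume formula \eqref{eq:volume} applied to $B_p(f)\subset \R^{n-1}$, this integral equals $(n-1)\,\mathrm{vol}_{n-1}(B_p(f))$. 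Finally, raising \eqref{eq:volumea} to the power $1-n$ yields $((n-1)\mathrm{vol}_{n-1}(B_p(f))) = (\mathcal E_p(f)/c_{n-1,p})^{1-n}$, which completes the identification.

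There is no real obstacle here: the argument is just an unwinding of the definitions together with polar coordinates and Fubini. The only point to be mindful of is the integrability exchange in the second step, but since $f\in\mathcal D^p(\R^n_+)$ is assumed nonzero, $\|v\|_{B_p(f)}$ is a genuine norm on $\R^{n-1}$ (finite and strictly positive on $S^{n-2}$), and $v\mapsto \|v\|_{B_p(f)}^p$ is continuous on the compact sphere, so Fubini applies with no integrability issues, and the homogeneity of $\|\cdot\|_{B_p(f)}$ makes the cancellation of exponents legitimate.
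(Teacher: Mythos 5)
Your proof is correct: the polar-coordinate computation of $\int_{B_p(f)}|\langle u,y\rangle|^p\,dy$ gives exactly the rescaling relating $K_p(f)$ to $\Gamma_p B_p(f)$, and the Fubini--Tonelli swap plus the polar volume formula \eqref{eq:volume} and \eqref{eq:volumea} give the second identity. This is essentially the same argument the paper relies on (it omits the computation and cites Lemmas 3.2 and 3.3 of \cite{Nguyen2016}, which proceed by exactly this unwinding of the definitions), so there is nothing to add beyond your remark that $\|\cdot\|_{B_p(f)}$ is a genuine norm for nonzero $f\in\mathcal D^p(\R^n_+)$.
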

It follows from the $L_p$ Busemann--Petty centroid inequality \eqref{eq:Lpcentroid} that
\[
{\rm vol_{n-1}}(\Gamma_p B_p(f)) \geq {\rm vol_{n-1}}(B_p(f)),
\]
with equality if and only if $B_p(f)$ is an origin--symmetric ellipsoid in $\R^{n-1}$. Combining the preceding inequality together with Lemma \ref{relation1}, we get
\begin{equation}\label{eq:aaaa}
\lt(\frac{\om_{n-1}}{{\rm vol_{n-1}}(K_p(f))}\rt)^{\frac1{n-1}} \lt(\int_{\R^n_+} h_{K_p(f)}(\na_x f(t,x))^p dx dt \rt)^{\frac1p} \leq \mathcal E_p(f),
\end{equation}
with equality if and only if $B_p(f)$ is an origin--symmetric ellipsoid in $\R^{n-1}$. Denote 
\[
\overline K_p(f) =\lt(\frac{\om_{n-1}}{{\rm vol_{n-1}}(K_p(f))}\rt)^{\frac1{n-1}} K_p(f).
\]
Then ${\rm vol_{n-1}}(\overline K_p(f)) = \om_{n-1}$ and the inequality \eqref{eq:aaaa} is equivalent to
\begin{equation}\label{eq:aaaa*}
\int_{\R^n_+} h_{\overline K_p(f)}(\na_x f(t,x))^p dx dt \leq \mathcal E_p(f)^p,
\end{equation}
with equality if and only if $B_p(f)$ is an origin--symmetric ellipsoid in $\R^{n-1}$. Given a vector $\vec{a} \in \R^{n-1}$, let $B_{p,\vec{a}}(f)$ denote the unit ball of the norm
\[
\|(t,x)\|_{p,\vec{a},f} = \lt(|t|^q + \|x + t \vec{a}\|_{\overline K_p(f)}^q\rt)^{\frac1q},\qquad q =\frac p{p-1}.
\]
We can readily check that
\begin{equation}\label{eq:volumeBpf}
{\rm vol_{n-1}} (B_{p,\vec{a}}(f)) = \frac2 q \frac{\Gamma(\frac{n-1}q +1) \Gamma(\frac1q)}{\Gamma(\frac nq +1)} \,\om_{n-1},
\end{equation}
and 
\begin{equation}\label{eq:dualnorm}
h_{B_{p,\vec{a}}(f)}((t,x)) = \sup_{\|(s,y)\|_{p,f} \leq 1} \lt(ts + \la x, y\ra\rt) = \lt(h_{\overline K_p(f)}(x)^p + |t-\la\vec{a}, x\ra|^p\rt)^{\frac1p}.
\end{equation}
Hence, it follows from \eqref{eq:aaaa*} and \eqref{eq:dualnorm} that
\begin{equation}\label{eq:bbbb}
\mathcal E_p(f)^p + \int_{\R^n_+} \lt|D_{(1,-\vec{a})}f\rt|^p dx dt \geq \int_{\R^n_+} h_{B_{p,\vec{a}}(f)}(\na f(t,x))^p dx dt.
\end{equation}
Now, applying the sharp Gagliardo--Nirenberg trace inequality \eqref{eq:GNtrace} with the norm $\|\cdot\|_{p,\vec{a},f}$, we obtain
\begin{equation}\label{eq:specialGN}
\|g\|_{L^{\frac{p(a-1)}{a-p}}(\pa \R^n_+)} \leq \mathcal D_{n,a,p} \lt(\int_{\R^n_+} h_{B_{p,\vec{a}}(f)}(\na g)^p dz\rt)^{\frac{\theta}p} \|g\|_{L^{\frac{p(a-1)}{a-p}}(\R^n_+)}^{(1-\theta)}
\end{equation}
where $\theta$ is given by \eqref{eq:theta} and $\mathcal D_{n,a,p}$ is given by \eqref{eq:sharpconstantaffineGN}. By Theorem \ref{Maintheorem2}, there is equality in \eqref{eq:specialGN} if and only if  
\begin{equation}\label{eq:equalityg}
g(t,x) = \lt((1+ t)^q + \|x + t \vec{a}\|_{\overline K_p(f)}^q \rt)^{-\frac{a-p}{p}},
\end{equation}
up to a translation on $\{0\} \times \R^{n-1}$, a dilation and a multiplicative constant.

Combining \eqref{eq:bbbb} and \eqref{eq:specialGN} together, we get
\begin{equation}\label{eq:1st}
\|f\|_{L^{\frac{p(a-1)}{a-p}}(\pa \R^n_+)} \leq \mathcal D_{n,a,p} \lt(\mathcal E_p(f)^p + \int_{\R^n_+} \lt|D_{(1,-\vec{a})}f\rt|^p dx dt\rt)^{\frac{\theta}p}\|f\|_{L^{\frac{p(a-1)}{a-p}}(\R^n_+)}^{1-\theta}.
\end{equation}
Since the function
\[
\vec{a} \mapsto \int_{\R^n_+} \lt|D_{(1,-\vec{a})}f\rt|^p dx dt
\]
is strictly convex on $\R^{n-1}$, then there exists uniquely $\vec{a}_0$ such that
\begin{equation}\label{eq:minfollowa}
\int_{\R^n_+} \lt|D_{(1,-\vec{a}_0)}f\rt|^p dx dt = \min_{\vec{a} \in \R^{n-1}} \int_{\R^n_+} \lt|D_{(1,-\vec{a})}f\rt|^p dx dt.
\end{equation}
Consequently, we get from \eqref{eq:1st} that
\begin{equation}\label{eq:2nd}
\|f\|_{L^{\frac{p(a-1)}{a-p}}(\pa \R^n_+)} \leq \mathcal D_{n,a,p} \lt(\mathcal E_p(f)^p + \min_{\vec{a}\in \R^{n-1}}\int_{\R^n_+} \lt|D_{(1,-\vec{a})}f\rt|^p dx dt\rt)^{\frac{\theta}p}\|f\|_{L^{\frac{p(a-1)}{a-p}}(\R^n_+)}^{1-\theta}.
\end{equation}
For any $\lam >0$, applying \eqref{eq:2nd} to function $f_\lam(t,x) = f(\lam t,x)$, we get
\begin{align}\label{eq:3th}
\|f\|_{L^{\frac{p(a-1)}{a-p}}(\pa \R^n_+)} &\leq \mathcal D_{n,a,p} \Bigg(\lam^{-1 -\frac{1-\theta}\theta \frac{a-p}{a-1}}\mathcal E_p(f)^p  + \lam^{p-1 -\frac{1-\theta}\theta \frac{a-p}{a-1}} \min_{\vec{a}\in \R^{n-1}}\int_{\R^n_+} \lt|D_{(1,-\vec{a})}f\rt|^p dx dt\Bigg)^{\frac{\theta}p}\notag\\
&\qquad\qquad  \times \|f\|_{L^{\frac{p(a-1)}{a-p}}(\R^n_+)}^{1-\theta}.
\end{align}
Note that $p-1 -\frac{1-\theta}\theta \frac{a-p}{a-1} >0$ by the assumption \eqref{eq:theta}, hence the right hand side of \eqref{eq:3th} is minimized by 
\begin{equation}\label{eq:lambdaminimize}
\lam_0 = \lt(\frac{p(a-n) + n-1}{(p-1)(n-1)}\rt)^{\frac1p} \frac{\mathcal E_p(f)}{\lt(\min\limits_{\vec{a}\in \R^{n-1}}\int_{\R^n_+} \lt|D_{(1,-\vec{a})}f\rt|^p dx dt\rt)^{\frac1p}}.
\end{equation}
Taking $\lam =\lam_0$ in \eqref{eq:3th}, we obtain the inequality \eqref{eq:affineGNtrace} as desired. 

It remains to check the condition of equalities in \eqref{eq:affineGNtrace}. If $f =h_p$ then $\overline K_p(f)$ is an Euclidean unit ball in $\R^{n-1}$. By the convexity and the evenness of $h_p$ in $x$ variables, the minimum in \eqref{eq:minfollowa} is attained by $\vec{a}_0 = 0$, hence $\|(t,x)\|_{p,0,h_p} = (|t|^q + |x|^q)^{\frac1q}$, $q =p/(p-1)$. Thus, we have equality in \eqref{eq:aaaa*}, \eqref{eq:bbbb} and \eqref{eq:1st} with $f =h_p$ and $\vec{a} =0$ which imply  the equality in \eqref{eq:2nd} for $f =h_p$. It is easy to check that $\lam_0$ defined by \eqref{eq:lambdaminimize} is equal to $1$ if $f =h_p$. Hence, the equality in \eqref{eq:3th} for $f =h_p$ and $\lam =1$ proves the equality in \eqref{eq:affineGNtrace} for $f =h_p$. The ${\rm SGL}_{n,+}$ invariant property of the inequality \eqref{eq:affineGNtrace} implies that equality holds in \eqref{eq:affineGNtrace} for functions of the form \eqref{eq:extremalGN}.

Conversely, suppose that equality holds in \eqref{eq:affineGNtrace} for a non-zero function $f$. Let $\vec{a}_0$ denote the unique minimizer in \eqref{eq:minfollowa} and let $\lam_0$ be defined by \eqref{eq:lambdaminimize}. Consider the function $f_{\lam_0}$ defined by $f_{\lam_0}(t,x) = f(\lam_0 t,x)$. Note that $f_{\lam_0} = f$ on $\pa \R^n_+$. Applying the inequalities \eqref{eq:aaaa*}, \eqref{eq:bbbb} and \eqref{eq:specialGN} for $f_{\lam_0}$ and $\vec{b}_0 := \lam_0 \vec{a}_0$, we get
\begin{align*}
\|f\|_{L^{\frac{p(a-1)}{a-p}}(\pa \R^n_+)} & = \|f_{\lam_0}\|_{L^{\frac{p(a-1)}{a-p}}(\pa \R^n_+)}\\
&\leq \mathcal D_{n,a,p} \lt(\int_{\R^n_+} h_{B_{p,\vec{b}_0}(f_{\lam_0})}(\na f_{\lam_0})^p dz\rt)^{\frac{\theta}p} \|f_{\lam_0}\|_{L^{\frac{p(a-1)}{a-p}}(\R^n_+)}^{(1-\theta)}\\
&= \mathcal D_{n,a,p} \lt(\int_{\R^n_+} (h_{\overline K_p(f_{\lam_0})}(\na_x f_{\lam_0})^p + |D_{(1,-\vec{b}_0)} f_{\lam_0}|^p) dx dt \rt)^{\frac{\theta}p} \|f_{\lam_0}\|_{L^{\frac{p(a-1)}{a-p}}(\R^n_+)}^{(1-\theta)}\\
&\leq \mathcal D_{n,a,p}\lt(\mathcal E_p(f_{\lam_0})^p + \lam_0^{p-1}\int_{\R^n_+}|D_{(1,-\vec{a}_0)} f|^p dx dt\rt)^{\frac\theta p}\|f_{\lam_0}\|_{L^{\frac{p(a-1)}{a-p}}(\R^n_+)}^{(1-\theta)}\\
&=\mathcal D_{n,a,p} \Bigg(\lam_0^{-1 -\frac{1-\theta}\theta \frac{a-p}{a-1}}\mathcal E_p(f)^p  + \lam_0^{p-1 -\frac{1-\theta}\theta \frac{a-p}{a-1}} \int_{\R^n_+} \lt|D_{(1,-\vec{a}_0)}f\rt|^p dx dt\Bigg)^{\frac{\theta}p}\\
&\qquad\qquad  \times \|f\|_{L^{\frac{p(a-1)}{a-p}}(\R^n_+)}^{1-\theta}\\
&=\mathcal D_{n,a,p}\,  \mathcal A_p(f)^{\theta} \|f\|_{L^{\frac{p(a-1)}{a-p}}(\R^n_+)}^{1-\theta}\\
&= \|f\|_{L^{\frac{p(a-1)}{a-p}}(\pa \R^n_+)}.
\end{align*}
As a consequence, all estimates for $f_{\lam_0}$ above must be equalities. Firstly, we must have equality in the Gagliardo--Nirenberg trace inequality \eqref{eq:specialGN} for functions $f$ and $g$ replaced by function $f_{\lam_0}$ and the vector $\vec{a}$ replaced by vector $\vec{b}_0$. Theorem \ref{Maintheorem2} implies the existence of  $c\not =0$, $\mu >0$ and $x_0\in \R^{n-1}$ such that
\[
f_{\lam_0}(t,x) = c \, \Big{\|}e + \frac{(t, x-x_0)}\mu \Big{\|}^{-\frac{a-p}{p-1}}_{p, \vec{b}_0, f_{\lam_0}} =c\, \lt(\lt(1+ \frac t\mu\rt)^q + \Big{\|} \frac{x-x_0 + t\vec{b}_0}\mu\Big{\|}_{\overline K_p(f_{\lam_0})}^q \rt)^{-\frac{a-p}p}.
\]
Secondly, we have the equality holds in \eqref{eq:aaaa*} for $f_{\lam_0}$ which implies that $B_p(f_{\lam_0})$ is an origin--symmetric ellipsoid. Then so is $K_p(f_{\lam_0})$. Since ${\rm vol_{n-1}}(\overline K_p(f_{\lam_0})) = \om_{n-1}$, there is $B \in {\rm GL}_{n-1}$ such that ${\rm det}\, B =\pm 1$ and $\overline K_{p}(f_{\lam_0}) = B B_2^{n-1}$. Therefore, $\|x\|_{\overline K_p(f_{\lam_0})} = |B^{-1} x|$ which gives the following expression of $f_{\lam_0}$
\[
 f_{\lam_0}(t,x) = c\, \lt(\lt(1+ \frac t\mu\rt)^q + \Big{|} \frac{B^{-1}(x-x_0) + tB^{-1}\vec{b}_0}\mu\Big{|}^q \rt)^{-\frac{a-p}p}
\]
and
\[
 f(t,x) = c\, \lt(\lt(1+ \frac t{\lam_0\mu}\rt)^q + \Big{|} \frac{B^{-1}(x-x_0) + tB^{-1}\vec{a}_0}\mu\Big{|}^q \rt)^{-\frac{a-p}p}.
\]
Let us now denote $\vec{a} =(a_1,\ldots,a_{n-1})^t: = \frac1\mu B^{-1}\vec{a}_0 \in \R^{n-1}$, and 
\[
A = 
\begin{bmatrix}
\frac1{\lam_0\mu} & 0 & \ldots&0\\
a_1 \\
\vdots &  &\frac1\mu B^{-1}\\
a_{n-1} \\
\end{bmatrix}
\in {\rm SGL}_{n,+}.
\]
Obviously, we have $f(t,x) = c\, h_p(A(t, x-x_0))$ as wanted. The proof of Theorem \ref{affineGNtrace} is completed.\\

We finish this paper by remark that the arguments in this paper can be applied to obtain a more general sharp Gagliardo--Nirenberg trace inequality as follows. Let $p\in (1,n)$, $\lam \in (0,1)$ and $f\in \mathcal D^p(\R^n_+)$, define
\[
\mathcal E_{\lam,p}(f) =2^{\frac1p} c_{n-1,p} \lt(\int_{S^{n-2}} \lt(\int_{\R^n_+} \lt(\lam \la u, \na_x f\ra_+^p + (1-\lam)\la u,\na_x f\ra_-^p \rt)dx dt\rt)^{-\frac{n-1}{p}} du\rt)^{-\frac1{n-1}},
\]
where $a_+$ and $a_-$ denote the positive and negative parts of a number $a$, and
\[
\mathcal A_{\lam,p}(f)^p =c_{n,p,a}^p (\mathcal E_{\lam,p}(f))^{p-1 -\frac{(p-1)(a-n)}{a-1}} \inf_{\vec{a} \in \R^{n-1}} \lt(\int_{\R^n_+} |D_{(1,\vec{a})} f|^p dx dt\rt)^{\frac1p(1 +\frac{(p-1)(a-n)}{a-1})}.
\] 
From the definition, we have $\mathcal E_{\frac12,p}(f) = \mathcal E_p(f)$ and $\mathcal A_{\frac12,p} = \mathcal A_p(f)$.  Applying the general $L_p$ Busemann--Petty centroid inequality \cite{HS09Iso} and the argument in this section, we can prove the following inequality whose detail proof is left for interest reader,
\begin{equation*}
\lt(\int_{\pa \R^n_+} |f(0,x)|^{\frac{p(a-1)}{a-p}} dx\rt)^{\frac{a-p}{p(a-1)}} \leq \mathcal D_{n,a,p}\,  \mathcal A_{\lam,p}(f)^{\theta} \lt(\int_{\R^n_+} |f(t,x)|^{p\frac{a-1}{a-p}} dxdt\rt)^{(1-\theta)\frac{a-p}{p(a-1)}}
\end{equation*}
for any $f \in \mathcal D^{p, \frac{p(a-1)}{a-p}}(\R^n_+)$, where $\theta$ is given by \eqref{eq:theta} and $\mathcal D_{n,a,p}$ is given by \eqref{eq:sharpconstantaffineGN}. Moreover, there is equality if and only if 
\begin{equation*}
f(t,x) = c\, h_p(A(t, x-x_0))
\end{equation*}
for some $c \in \R$, $x_0\in \R^{n-1}$ and $A \in {\rm SGL}_{n,+}$, where $h_p(t,x) =((1+t)^q + |x|^q)^{-\frac{a-p}{p-1}}$.


\section*{Acknowledgments}
This work was supported by the CIMI postdoctoral research fellowship.

\end{document}